\makeatletter \@namedef{subjclassname@2010}{%
  \textup{2010} Mathematics Subject Classification}
\newcounter{thm} \numberwithin{thm}{section}
\newtheorem{Theorem}[thm]{Theorem}
\newtheorem{Lemma}[thm]{Lemma}
\newtheorem{Corollary}[thm]{Corollary}
\newcommand{\NN}[0]{\mathbb N}
\newcommand{\CC}[0]{\mathbb C}	
	\newcommand{\QQ}[0]{\mathbb Q}
\newcommand{\EE}[0]{\mathbb E}	\newcommand{\RR}[0]{\mathbb R}
\newcommand{\FF}[0]{\mathbb F}	
	\newcommand{\TT}[0]{\mathbb T}
	\newcommand{\vv}[0]{\mathbb U}
	\newcommand{\ZZ}[0]{\mathbb Z}
	\newcommand{\cP}[0]{\mathcal P}
\newcommand{\pp}[0]{\textbf{\textit{p}}}
\renewcommand{\vv}[0]{\textbf{\textit{v}}}
\renewcommand{\bar}[1]{\overline{#1}}
\newcommand{\one}[0]{\mathbf{1}}
\newcommand{\eps}[0]{\varepsilon}
\renewcommand{\mod}[1]{\ (\text{mod }#1)}
\newcommand{\lr}[1]{\left(#1\right)}
\renewcommand{\gcd}[0]{\mathrm{gcd}}
\renewcommand{\hat}[1]{\widehat{#1}}
\newcommand{\ang}[1]{\left\langle#1\right\rangle}
\begin{document}

\baselineskip=17pt



\title{The sum-product problem for integers with few prime factors}  

\author[B. Hanson]{Brandon Hanson} \address{University of Maine, Orono, Maine, U.S.A.}
\email{brandon.w.hanson@gmail.com}

\author[M. Rudnev]{Misha Rudnev} \address{School of Mathematics, University of Bristol, Bristol BS8 1UG, U.K.}
\email{misharudnev@gmail.com}

\author[I. Shkredov]{Ilya Shkredov} 
\email{ilya.shkredov@gmail.com}

\author[D. Zhelezov]{Dmitrii Zhelezov} 
\address{Johann Radon Institute for Computational and Applied Mathematics\\
Linz, Austria}
\email{dzhelezov@gmail.com}

\date{}
\maketitle

\begin{abstract}
    It was asked by E. Szemer\'edi if, for a finite set $A\subset\ZZ$, one can improve estimates for $\max\{|A+A|,|A\cdot A|\}$, under the constraint that all integers involved have a bounded number of prime factors -- that is, each $a\in A$ satisfies $\omega(a)\leq k$. In this paper, answer Szemer\'edi's question in the affirmative by showing that this maximum is of order $|A|^{\frac{5}{3}-o(1)}$ provided $k\leq (\log|A|)^{1-\eps}$ for some $\eps>0$. In fact, this will follow from an estimate for additive energy which is best possible up to factors of size $|A|^{o(1)}$.
\end{abstract}

\maketitle

\section{Introduction}

    The sum-product phenomena was introduced by Erd\H{o}s and Szemer\'edi in \cite{ES}.
   \begin{quotation}
 {\em  Let  $1 < a_1 < \ldots  <a_n $  be a sequence of integers. Consider the integers of the form \begin{equation}\label{eq:quote} a_i+a_j,\;a_ia_j: \qquad 1\leq i\leq j\leq n\,. \end{equation}
It is tempting to conjecture that for every $\epsilon >0$ there is an $n_0$, so that for every $n\geq n_0$, 
there are more than  $n^{2-\varepsilon}$  distinct integers of the form \eqref{eq:quote}.} \end{quotation}

In contemporary notation, we are interested in the sizes of the set of sums and products, defined for a subset $A$ of integers, or more generally a commutative ring, as
\[A+A=\{a+b:a,b\in A\},\ A\cdot A=\{ab:a,b\in A\}.\]
As a general heuristic, the conjecture suggests that either $A+A$ or $A\cdot A$ is significantly larger then the original set, unless $A$ is close to a subring. In the case of the integers, the latter cannot occur as there are no non-trivial finite subrings. The interested reader may consult \cite{TaoVu} for a rather thorough treatment of sumsets and related questions, including some prior work on the sum-product problem.

Erd\H{o}s and Szemer\'edi  continue with the following statement. \begin{quotation}  {\em Perhaps our conjectures remain true if the   $a'$s  are real or complex numbers.} \end{quotation} Erd\H{o}s and Szemer\'edi ultimately prove
\[
    \max\{ |A+A|, |A\cdot A| \} \gg |A|^{1+c} \,,
\]
 where the exponent $c$ can be seen to equal to $\frac{1}{31}$, see \cite{N}, and conjecturally, any $c<1$ is admissible, at the cost of the implicit constant. 
 
 The sum-product phenomenon has been extensively studied in the last few decades, the current records as of writing being
\cite{RS} for real numbers, and \cite{MSt} (also, see \cite{RSh}) for sufficiently small sets in finite fields. 

While the sum-product problem was originally posed for finite sets of integers, a number of techniques involving combinatorial and convex geometry have been the predominant tools in the area a number of years, and these techniques work just as well for finite sets of reals. Indeed, see \cite{E}, \cite{S}, and \cite{HRNR} for techniques that helped establish our current understanding of the problem over the reals. However, one aspect of the problem that is understood in only the arithmetic setting (over $\ZZ$ or perhaps $\QQ$) is the nature of sets with few distinct products. Indeed, in this setting, unique factorization and $p$-adic analysis have allowed for progress which has not been matched by real-variable methods. Results leveraging the techniques we have in mind begin with \cite{Chang} and the subsequent \cite{BC}, and have been elaborated upon in \cite{HRNZ1}, \cite{HRNZ2} and \cite{PZ}. In particular, one has a much better understanding of sets $A$ for which $A\cdot A$ is very small when $A$ consists of integers. Perhaps motivated by these sorts of results, and the fact that $\{1,\ldots,N\}$ is a near extremal example for the sum-product problem, E. Szemer\'edi asked the fourth listed author whether sum-product estimates for $A$ are improved when $A$ consists of integers satisfying\footnote{Here, $\omega(a)$ denotes the number of distinct prime factors of $a$.} $\omega(a)\leq k$, and even with the very limiting constraint, say, $k=10$. This is a natural question to ask, as if something like the initial segment $A=\{1,\ldots,N\}$ were in fact the worst case, then one would have $\omega(a)\leq \log\log|A|(1+o(1))$, on average -- see, for instance, \cite{MV}.

Even when $k=1$, this problem already hints at some subtle behaviour. Indeed when $A$ consists of the primes up to $N$, of which there are about $N/\log N$ by the \textit{Prime Number Theorem}. One of course has  $|A\cdot A|\gg |A|^2$, but $|A+A|\ll |A|\log|A|$; more dramatic still is when $A$ is an arithmetic progression composed of primes, whose existence is the content of the \textit{Green-Tao Theorem} proved in \cite{GT}, where one has $|A+A|= 2|A|-1$. So the constraint $k=1$ still allows for $A$ to be as structures as is possible so far as addition is concerned. On the other hand, one could also take $A=\{p,p^2,\ldots,p^N\}$ for any prime $p$ -- recall that $\omega$ ignores multiplicity -- and so $|A+A|\gg |A|^2$ but $|A\cdot A|= 2|A|-1$, and now the multiplicative structure of $A$ is as maximal.

It is really in the case $k=2$, however, that the problem shows its teeth. The following example, which we know from \cite{BW} is what we shall refer to as a \textit{Balog-Wooley set}. It merely consists of of the product of a geometric progression and an arithmetic progression: \begin{align*}
\Gamma&=\{r^m:1\leq m\leq M\},\\B&=\{a+dn:1\leq n\leq N\},\\A&=\Gamma\cdot B=\{r^m(a+dn):1\leq m\leq M,\ 1\leq n \leq N\}.
\end{align*}
Balog and Wooley chose $B=\{1,\ldots,2n^2\}$ and $\Gamma=\{1,2,4,\ldots,2^{n-1}\}$. (Note that to avoid collisions one can replace the prime $2$ generating $\Gamma$ by some $p>2n^2$.) More generally, one can consider \textit{approximate Balog-Wooley sets} where $\Gamma$ and $B$ are replaced by approximations to geometric and arithmetic progressions, and one can certainly choose $r$, as well as all members of $B$ to consist be primes, as described in the $k=1$ case. These examples gain their structure from imposing the constraints \[|\Gamma\cdot \Gamma|\leq K_\Gamma|\Gamma|,\ |B+B|\leq K_B|B|\] with $K_\Gamma$ and $K_B$ basically as small as desired. This will force the product set to be small, since
\[|A\cdot A|\leq |\Gamma\cdot \Gamma||B\cdot B|\leq K_\Gamma|\Gamma||B|^2\leq (K_\Gamma|B|)|A|.\]
At the same time, these sets also satisfy
\[E_+(A,A)\geq |\Gamma|E_+(B,B)\geq \frac{|\Gamma||B|^3}{K_B}=\frac{|A|^3}{K_B|\Gamma|^2},\]
where $E_+(X,Y)$ denoted the \textit{additive energy}
\[E_+(X,Y)=|\{(x_1,x_2,y_1,y_2)\in X\times X\times Y\times Y:x_1+y_1=x_2+y_2\}|,\] owing merely to the diagonal solutions \[\gamma(b_1+b_2)=\gamma(b_3+b_4) .\]
In particular, $A$ can satisfy $|A\cdot A|\ll |A|^{\frac{5}{3}-o(1)}$ and $E_+(A,A)\gg |A|^{\frac{7}{3}}$ by taking $|\Gamma|^2=|B|$, as did Balog and Wooley in their work. They further conjectured that the exponent $\frac{7}{3}-o(1)$ was best possible should one wish to decompose $A$ into pieces, one with small additive energy and the other with small multiplicative energy. If one used only additive energy to predict $|A+A|$, say by the standard Cauchy-Schwarz estimate \[|A+A|E_+(A,A)\geq |A|^4,\]
then one could only deduce $|A+A|\gg |A|^{\frac{5}{3}}$. From this example, we observe that even if each member of $A$ has but two prime factors, one can do no better than the exponent $\frac{5}{3}$ in the statement \[\textit{There is a subset } \tilde A\subseteq A \textit{ for which } |A\cdot A| + \frac{|\tilde A|^4}{E_+(\tilde A, \tilde A)} \gg |A|^{\frac{5}{3}},\]
losing another $o(1)$ to the exponent $\frac{5}{3}$ if the number of prime factors increases to roughly $\log\log |A|$.

In fact, it is this statement that we prove, up to terms growing slower than any power of $|A|$ and under the few prime factor constraint. Of course, the additive energy of $A$ may not be an accurate predictor for $|A+A|$, and indeed, Balog-Wooley sets do not violate the Erd\H{o}s-Szemer\'edi conjecture, which \textit{is} still a conjecture, after all.

Our main theorem is the following.

\begin{Theorem}\label{MainThm}
    Let $\eps$ be a real number with $0<\eps<1/6$ and let $k\in \NN$ be positive integer. Suppose $A$ is a sufficiently large finite set of integers satisfying $\omega(a)\leq k$ for each $a\in A$ and such that $(\log |A|)^{1-6\eps}\geq k$. Then, there is a subset $\tilde A\subseteq A$ with \[|\tilde A|\geq \frac{|A|}{k! (2\log|A|)^k}\] and \[|A\cdot A|+\frac{|\tilde A|^4}{E_+(\tilde A,\tilde A)}\geq |A|^{\frac{5}{3}}\exp(-C(\log |A|)^{1-\eps}),\]
    for some absolute constant $C>0$.
\end{Theorem}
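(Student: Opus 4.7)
The plan is to dichotomize on $|A\cdot A|$. If $|A\cdot A|\ge|A|^{5/3}$, the first term in the desired inequality already dominates and we are done. Otherwise $|A\cdot A|\le|A|^{5/3}$, and we must find $\tilde A\subseteq A$ with $E_+(\tilde A,\tilde A)\le|\tilde A|^4|A|^{-5/3}\exp(C(\log|A|)^{1-\eps})$, which by Cauchy-Schwarz yields the required lower bound on $|\tilde A|^4/E_+(\tilde A,\tilde A)$.

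The first main step is a multiplicative structure reduction: from the product doubling $K=|A\cdot A|/|A|\le |A|^{2/3}$, one applies Pl\"unnecke-Ruzsa together with a multiplicative Freiman-Ruzsa theorem in the torsion-free group $\QQ^\times$ (or an equivalent dyadic/multiplicative pigeonholing argument) to pass to a subset of $A$ whose elements are all built from a fixed set of primes $P=\{p_1,\ldots,p_d\}$ with $d\le 2\log|A|$. Combined with the hypothesis $\omega(a)\le k$, a pigeonhole on which $k$-element subset of $P$ occurs as $\supp(a)$ (there are at most $\binom{d}{k}\le(2\log|A|)^k/k!$ possibilities) then yields the announced subset $\tilde A$ of size at least $|A|/(k!(2\log|A|)^k)$, all of whose elements share a common prime support $S=\{p_{i_1},\ldots,p_{i_k}\}\subseteq P$. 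In particular $\tilde A$ is a set of $S$-integers, recorded faithfully by exponent vectors in $\NN^k$.

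The core of the argument is the additive energy upper bound $E_+(\tilde A,\tilde A)\le|\tilde A|^{7/3}\exp(C(\log|A|)^{1-\eps})$. Each equation $a+b=c+d$ with $a,b,c,d\in\tilde A$ is an additive relation among $S$-integers; after dividing by a common $S$-integer factor one obtains an $S$-unit equation. The Balog-Wooley diagonal contributions---solutions in which $a=c,\ b=d$ up to scaling by a common multiplicative factor---are exactly what saturates the $|\tilde A|^{7/3}$ bound and are handled by exploiting the multiplicative generalized progression structure of $\tilde A$. The off-diagonal contributions are bounded using the theory of $S$-unit equations (Evertse-Beukers-Schlickewei) or equivalent $p$-adic valuation and height arguments, and are then combined with an \absg reduction to translate the energy bound into the desired sum-product inequality.

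The main obstacle is to execute the energy bound with the sharp exponent $7/3$ while accommodating the quasipolynomial error $\exp(C(\log|A|)^{1-\eps})$. Naive $S$-unit equation bounds scale exponentially in $|S|=k$, which would be prohibitive for unrestricted $k$; the hypothesis $k\le(\log|A|)^{1-6\eps}$ provides precisely the slack needed for the exponential-in-$k$ loss---as well as the $k!(2\log|A|)^k$ factor lost in pigeonholing---to fit inside the permitted $\exp(C(\log|A|)^{1-\eps})$ error. Balancing the off-diagonal $S$-unit loss against the diagonal Balog-Wooley contribution, and ensuring that iterated applications of Pl\"unnecke-Ruzsa and the multiplicative Freiman-Ruzsa theorem preserve the correct exponents throughout, is the technical heart of the proof.
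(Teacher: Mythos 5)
Your structure reduction is the genuine gap. You propose passing, via multiplicative Pl\"unnecke--Ruzsa and Freiman--Ruzsa in $\QQ^\times$, to a large subset of $A$ all of whose elements are $S$-integers for a fixed set $P$ of $\leq 2\log|A|$ primes, and then pigeonholing on $k$-element supports to obtain $\tilde A$ with a \emph{common} support $S$, $|S|=k$. This cannot work. The Balog--Wooley example $A=\Gamma\cdot B$ in the introduction, with $\Gamma$ a geometric progression in a prime $r$ and $B$ an arithmetic progression of primes of length $\approx |A|^{2/3}$, has $\omega(a)\leq 2$ and $|A\cdot A|\ll |A|^{5/3}$, yet involves about $|A|^{2/3}$ distinct primes. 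No dense subset is supported on $O(\log|A|)$ primes, let alone on a common $k$-element set. Indeed, were $\tilde A\subseteq\ang{S}_+$ with $|S|=k$ achievable at this density, dividing the energy equation by one variable and invoking the $S$-unit bound (Theorem~\ref{th:subspace}) with only trivial degeneracies for positive integers would give $E_+(\tilde A,\tilde A)\ll|\tilde A|^2\exp(O(k))$, hence $|\tilde A|^4/E_+\gg|A|^{2-o(1)}$ --- contradicting the sharpness of the $5/3$ exponent exhibited by Balog--Wooley sets. So the intermediate claim you need is simply false.

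What the paper actually proves (Theorem~\ref{RegularFibredStructure}) is weaker and is obtained by an iterated graph/pigeonhole argument rather than Freiman--Ruzsa: one extracts $\leq k$ primes $p_1,\ldots,p_r$ and a subset $\tilde A=\bigcup_\vv\pp^\vv B_\vv=\bigcup_b b\cdot\Gamma_b$ in which each $B_\vv$ consists of integers merely \emph{coprime} to $p_1\cdots p_r$ (they may carry arbitrarily many other primes), the dual fibres $\Gamma_b\subseteq\ang{p_1,\ldots,p_r}$ are size-regularized, and $|B|$ is controlled by the product doubling via Lemma~\ref{ProductBound}. The additive energy is then handled by a genuinely new analytic ingredient you omit: the Burkholder/Littlewood--Paley square-function estimate (Theorem~\ref{SquareFunction}, Corollary~\ref{EnergyDecomposition}), which sharpens Chang's lemma and reduces $E_+(\tilde A,\tilde A)$ to $\sum_{\vv,\vv'}E_+(\pp^\vv B_\vv,\pp^{\vv'}B_{\vv'})$ with only $C^k$ loss; the $S$-unit machinery (Lemma~\ref{RNZ}) then enters to bound $b_1-b_2\in\gamma\Gamma$ with $b_i$ in the \emph{fibres} $B_\vv$, not with everything an $S$-integer. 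Finally, no Balog--Szemer\'edi--Gowers step is needed: the theorem's conclusion is stated directly in terms of $|\tilde A|^4/E_+(\tilde A,\tilde A)$, so once the energy bound is in hand the argument closes by optimizing over $K=|A\cdot A|/|A|$.
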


In particular, we get the following sum-product estimate.

\begin{Corollary}
    Let $A$ be a finite set of integers and let $\eps$ and $k$ be parameters such that the conditions of Theorem \ref{MainThm} are met. Then, 
    \[\max\{|A+A|,|A\cdot A|\}\gg_{\eps,k} |A|^{\frac{5}{3}-o(1)}.\]
\end{Corollary}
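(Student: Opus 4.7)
The plan is to deduce the corollary directly from Theorem \ref{MainThm} by combining its lower bound with the standard Cauchy--Schwarz bound relating $|A+A|$ and the additive energy. First I would apply the theorem to produce a subset $\tilde A \subseteq A$ satisfying
\[|A\cdot A| + \frac{|\tilde A|^4}{E_+(\tilde A, \tilde A)} \geq |A|^{5/3} \exp(-C(\log|A|)^{1-\eps}).\]

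Next I would invoke the familiar Cauchy--Schwarz inequality $|X+X|\cdot E_+(X,X) \geq |X|^4$, taking $X = \tilde A$. Since $\tilde A \subseteq A$, this gives
\[\frac{|\tilde A|^4}{E_+(\tilde A, \tilde A)} \leq |\tilde A + \tilde A| \leq |A+A|,\]
so substituting into the theorem's estimate yields
\[|A\cdot A| + |A+A| \geq |A|^{5/3} \exp(-C(\log|A|)^{1-\eps}),\]
and hence $\max\{|A+A|, |A\cdot A|\} \geq \tfrac{1}{2}|A|^{5/3} \exp(-C(\log|A|)^{1-\eps})$.

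Finally, I would verify that the exponential factor is $|A|^{-o(1)}$: since $\eps > 0$, we have $(\log|A|)^{1-\eps}/\log|A| = (\log|A|)^{-\eps} \to 0$ as $|A| \to \infty$, so $\exp(-C(\log|A|)^{1-\eps}) = |A|^{-o(1)}$, where the implicit $o(1)$ depends on $\eps$. The dependence on $k$ in the implicit constant $\gg_{\eps,k}$ arises only because Theorem \ref{MainThm} requires $|A|$ to be large enough relative to the constraint $(\log|A|)^{1-6\eps} \geq k$. There is essentially no obstacle: the corollary is a repackaging of Theorem \ref{MainThm} via Cauchy--Schwarz, with all substantive work contained in the theorem itself.
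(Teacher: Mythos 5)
Your argument is correct and is exactly the deduction the paper has in mind (the Cauchy--Schwarz step $|A+A|E_+(A,A)\geq|A|^4$ is even flagged in the introduction). The paper gives no explicit proof of the corollary, and your write-up supplies precisely the intended one, including the correct bookkeeping that $\exp(-C(\log|A|)^{1-\eps})=|A|^{-o(1)}$ with the $o(1)$ depending on $\eps$.
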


As remarked above, the exponent $\frac{5}{3}$ is no coincidence, as our approach begins by showing that Balog-Wooley sets are essentially the worst possible scenario. It is because of this approach, namely an attack on the additive energy of $A$, that we fail to prove full quadratic growth. 

We also note that most of the sum-product estimates that we are aware of juxtapose energy with cardinality of ``the opposite set".
This is explicit in the title of the paper by Solymosi \cite{S}, which proves the inequality
$$
E_\times(A,A) \ll |A+A|^2 \log |A|\,,
$$
with the multiplicative energy $E_\times(A,A)$ being defined analogously to additive energy, with multiplication in place of addition. This, in particular, implies that if $|A+A|\leq K_+ |A|$, with a sufficiently small {\em additive doubling constant} $K_+$, then $E^\times(A,A)$ barely exceeds its trivial lower bound $|A|^2$ and hence $|A\cdot A|$ is almost $|A|^2$, proving a case of the Erd\H os-Szemer\'edi conjecture for reals (and even complex) numbers at the endpoint $K_+\approx 1$, see also \cite{KR}. 

On the other end, with $K_\times=|A\cdot A|/|A|$ being the {\em multiplicative doubling constant},   P\'alv\H{o}lgyi and the fourth author \cite{PZ} prove that for finite sets $A\subset \ZZ$ and $\eps\in(0,1/2)$, there is a subset $\tilde A\subseteq A$ such that $|\tilde A|\geq |A|^{1-\eps}$ and 
\[E_+(\tilde A, \tilde A)\leq K_\times^{\frac{4}{\eps}}|A|^{2+4\eps}.\] This theorem shed new light 
 on, and improved the results of Bourgain and Chang \cite{BC}. It also settles the Erd\H os-Szemer\'edi conjecture in the endpoint case $K_\times \approx 1$.\footnote{This is only known for integers (and rationals, by dilation invariance); the best known results for reals is much weaker \cite{BRS}.} Here one gets an additive energy estimate via the product set, which works as a good predictor for $|A+A|$. However, away from the endpoints, and in view of the Balog-Wooley example, an immediate application of the Cauchy-Schwarz inequality to pass from an upper bound on additive energy to a lower bound for sumset is too costly.

As far as the results in this article are concerned, their fountainhead is the following elementary lemma, which illustrates the strength of the few-prime-factors hypothesis.

\begin{Lemma}\label{ProductBound}
    Let $A$ and $B$ be finite sets of integers such that $\omega(a)\leq k$ for $a\in A$ and $\omega(b)\leq l$ for $b\in B$. Then any element $q\in A\cdot B$ admits at most $2^{k+l}$ solutions to $q=ab$ with $a\in A$, $b\in B$ and $\gcd(a,b)=1$.
\end{Lemma}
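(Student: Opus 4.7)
The plan is to reduce the count to a combinatorial count of coprime divisor pairs of a fixed integer $q$, and then use that coprimality forces each prime power of $q$ to be assigned entirely to one factor.

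First I would fix $q \in A\cdot B$ and suppose $q = ab$ with $a \in A$, $b \in B$, and $\gcd(a,b) = 1$. Unique factorization in $\ZZ$ lets us write $q = \prod_{i=1}^r p_i^{e_i}$ with the $p_i$ distinct primes and $e_i \ge 1$, where $r = \omega(q)$. The coprimality hypothesis $\gcd(a,b) = 1$ means that for every prime $p_i$ dividing $q$, we have $p_i \mid a$ or $p_i \mid b$ but not both; since $a$ and $b$ are positive integers (we may reduce to this case by pulling out signs, which costs only a factor already absorbed in the bound) and $ab = q$, this forces the full prime power $p_i^{e_i}$ to appear in either $a$ or $b$, entirely.

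Consequently, the factorization is determined by the subset $S \subseteq \{p_1,\dots,p_r\}$ of primes that divide $a$: one must have $a = \prod_{p_i \in S} p_i^{e_i}$ and $b = q/a$. Hence the number of coprime ordered factorizations $q = ab$ is exactly $2^{\omega(q)}$, and the number of such factorizations with $a \in A$ and $b \in B$ is at most $2^{\omega(q)}$. The final step is to bound $\omega(q)$. Since $a$ and $b$ are coprime, the sets of primes dividing $a$ and $b$ are disjoint, so $\omega(q) = \omega(a) + \omega(b) \le k + l$ by the hypotheses on $A$ and $B$. Therefore the number of admissible factorizations is at most $2^{k+l}$, as claimed.

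There is no real obstacle here; the whole content of the lemma is that coprime factorizations of $q$ are in bijection with subsets of the prime divisors of $q$, together with the subadditivity $\omega(ab) = \omega(a) + \omega(b)$ when $\gcd(a,b) = 1$. The only point to be slightly careful about is the handling of signs and the unit $\pm 1$, which one sidesteps either by assuming $A, B \subset \ZZ_{>0}$ or by noting that allowing signs at most doubles the count, and this can be absorbed into the constant without affecting the exponential shape $2^{k+l}$ used downstream.
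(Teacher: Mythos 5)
Your proof is correct and follows essentially the same route as the paper's: a coprime ordered factorization of $q$ is determined by choosing the subset of primes of $q$ that go into $a$, giving at most $2^{\omega(q)} \leq 2^{k+l}$ factorizations. You are slightly more careful than the paper about the sign ambiguity in $\ZZ$ (where the paper implicitly works with positive integers, as it does in the application), but the core argument is identical.
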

\begin{proof}
    Indeed, $\omega(q)\leq k+l$ and factorization $q=ab$ with $\gcd(a,b)=1$ amounts to choosing some subset of the primes dividing $q$. There are $2^{\omega(q)}$ such subsets.
\end{proof}

The proof of Theorem \ref{MainThm} involves three steps. First we establish an approximate structure theorem, Theorem \ref{RegularFibredStructure} of Section 2, for sets of integers with few prime factors. It is this theorem that tells us that (generalized) Balog-Wooley sets are essentially worst-possible. Lemma \ref{ProductBound} will combine with the structure theorem to provide uniform control on the fibres of the resulting decomposition.

The next ingredient is a Littlewood-Paley type theorem which allows us to estimate the additive energy of Balog-Wooley type sets quite efficiently. Our Theorem \ref{SquareFunction} is a refinement of the crucial Lemma 6 (see also Lemma \ref{EasyChang} below) of Chang from \cite{Chang}. This lemma was also the basis for developments in subsequent papers \cite{BC}, \cite{HRNZ1}, \cite{HRNZ2}, \cite{PZ}. The statement in question was strong enough to meet the objectives of the above mentioned papers, which dealt with the endpoint case of the small product set. A stronger result is needed to move away from the endpoint.

Theorem \ref{SquareFunction} is deduced from Burkholder's Littlewood-Paley Theorem for martingales, \cite{Bur}. The approach is a natural one, as randomization (i.e. martingale transforms) provides a path to iterated Littlewood-Paley decomposition, and Chang's Lemma is a martingale-difference method if one chooses to view it as such. During the course of this project, we discovered, by way of the wonderful book \cite{Pisier}, that Gundy and Varopoulos had previously observed the sort of result needed, \cite{GV}. We suspect the theorem, which appears as Corollary \ref{EnergyDecomposition} in Section 3, could have further applications to the area. 

The final component of the proof is a bound for additive energy averaged over dilates from a low-rank group. We give an elementary proof of the desired estimate when the rank is 1, see Theorem \ref{RankOne} of Section 4, and appeal to heavier machinery for the more general case, namely Lemma 2.1 of \cite{RNZ}, which relies on a rather strong version of the \textit{Subspace Theorem} from transcendence theory. We are curious as to whether or not our simple approach in Theorem \ref{RankOne} can be made to work in the general case.

Hence, our approach combines basic combinatorics and elementary number theory with modern tools from harmonic analysis and transcendence theory, which happen to perfectly fit into our considerations. One naturally wonders as to what extent this may be a coincidence.

Before proceeding with the proof proper, we give a simpler argument in the Section 3 which is still good enough to achieve the exponent $3/2$.

\section*{Acknowledgements}
The authors thank the Heilbronn Institute for Mathematical Research (HIMR) for funding a Focused Research Group {\em Testing Additive Structure} in May-June 2022, where this project was incepted, and the Johann Radon Institut (RICAM) Linz for being the venue and additional funding provided. We personally thank Oleksiy Klurman for co-organising the FRG and Oliver Roche-Newton for hosting it. Brandon Hanson is supported by NSF Award 2135200. Dmitrii Zhelezov was supported by the Austrian Science Fund FWF Project P 34180. 

\section{Notation}

\textit{Asymptotic notation}: For non-negative quantities $X$ and $Y$, $X\ll Y$ or $X=O(Y)$ both mean that for some absolute constant $C>0$, we have $X\leq CY$, while $X=o(Y)$ means that $X/Y\to 0$. Dependence of the implicit constant $C$ on parameters are indicated by subscripts, so fpr instance $X\ll_{\alpha,\beta} Y$ or $X=O_{\alpha,\beta}(Y)$ means that $X\leq CY$ for some $C(\alpha,\beta)>0$. The symbol $X\approx Y$ is meant to reflect that $X\ll Y\ll X$.

\textit{Number theoretic notation}: We reserve the letter $p$ for primes, and the fundamental theorem of arithmetic is then that, for integral $n$
\[n=\prod_p p^{v_p(n)},\]
the product being over distinct primes, where $v_p(n)$ denoted the $p$-adic valuation of $n$ (i.e. the exponent of $p$ in the unique factorization of $n$). This product is implicitly finite as $v_p(n)=0$ for all but finitely many values of $p$. For those $p$ where $v_p(n)>0$, we write $p|n$ and we let $\omega(n)$ denote the number of such $p$. We write $\gcd(m,n)=\prod_{p}p^{\min(v_p(m),v_p(n))}$ for the greatest common divisor of $m$ and $n$. If $P=\{p_1,\ldots,p_r\}$ is a set of primes, we can fix the ordering as an $r$-tuple $\pp=(p_1,\ldots,p_r)$ and then for an $r$-tuple $\vv=(v_1,\ldots,v_r)$ of integers, we write $\pp^\vv=p_1^{v_1}\cdots p_r^{v_r}$. We shall write $\ang{P}=\{\pp^\vv:\vv\in\ZZ^r\}$ for the multiplicative subgroup of $\QQ^\times$ generated by the primes in $P$, and $\ang{P}_+$ for the multiplicative semigroup containing those elements $n\in \ang P$ with $v_p(n)\geq 0$ for each prime in $P$.

\textit{Additive combinatorial notation}: For subsets $A$ and $B$ of complex numbers, we write 
\[A+B=\{a+b:a\in A,\ b\in B\}\] for the sumset,
\[A\cdot B=\{a+b:a\in A,\ b\in B\}\] for the product set, and 
\[t+A=\{t+a:a\in A\},\ d\cdot A=\{da:a\in A\}\]
for the translate of $A$ by $t\in \CC$ and dilate of $A$ by $d\in \CC$ respectively. The quantities
\[E_+(A,B)=|\{(a_1,b_1,a_2,b_2)\in A\times B\times A\times B:a_1+b_1=a_2+b_2\}|\] and 
\[E_\times (A,B)=|\{(a_1,b_1,a_2,b_2)\in A\times B\times A\times B:a_1 b_1=a_2 b_2\}|\]
denote, respectively, the additive and multiplicative energies of $A$ and $B$.

\textit{Graph theoretic notation}: If $X$ and $Y$ are sets, we will refer to $G\subseteq X\times Y$ as a bipartite graph. It is of course a directed graph, but this point will not be emphasized. We further define, for subsets $X'\subseteq X,\;Y'\subseteq Y$, \[N_{Y'}(x)=\{y\in Y':(x,y)\in G\},\ N_{X'}(y)=\{x\in X':(x,y)\in G\},\]
called the neighbours of $x$ and $y$ in $Y'$ and $X'$ respectively. The cardinalities $|N_Y(x)|$ and $|N_X(y)|$ are the degrees of $x$ and $y$. 

\textit{Analytic notation}: We identify $\TT=\RR/\ZZ=[0,1)$ as the torus, and endow the sufficiently nice functions on $\TT$ with the $L^q$-norm \[\|f\|_{L^q}=\lr{\int_0^1 |f(t)|^qdt}^{1/q}\] for $q\geq 1$ and the inner product \[\langle f,g\rangle=\int_0^1 f(t)\bar{g(t)}dt.\] A function $f:\TT\to\CC$ 
 of the form \[f(t)=\sum_{n\in \ZZ}\hat f(n)e(nt)\] is called a Fourier series. Here, the functions $e(nt)=e^{2\pi int}$ are the standard characters on $\ZZ$ and the coefficient $\hat f(n)$ is the $n$'th Fourier coefficient of $f$. If $\hat f(n)$ is non-zero for finitely many $n$, $f$ is called a trigonometric polynomial. 

\section{A simple argument yielding exponent 3/2}

We begin this section with a rather simple structure theorem that will give a feel for the general problem. It will not be this version of the structure theorem that is ultimately used, but it is pretty simple to prove and illustrates the general strategy.

\begin{Lemma}\label{GraphLemma}
    Suppose $G\subseteq X\times Y$. If $|N_Y(x)|\leq k$ for each $x\in X$, then there is a subset $Y'\subseteq Y$ of size at most $2k^2$ and such that for at least $|X|^2/2$ pairs $(x,x')\in X$, we have $N_Y(x)\cap N_Y(x')\subseteq Y'$. 
\end{Lemma}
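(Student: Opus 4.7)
The plan is to take $Y'$ to be the set of ``heavy'' vertices in $Y$, defined by an appropriately chosen degree threshold. The starting point is the edge-count identity
\[\sum_{y\in Y}|N_X(y)| \;=\; \sum_{x\in X}|N_Y(x)| \;\leq\; k|X|,\]
which is immediate from the hypothesis $|N_Y(x)|\leq k$. With this in hand I would define
\[Y' \;:=\; \{y\in Y : |N_X(y)| \geq |X|/(2k)\},\]
and a one-line averaging argument, $\tfrac{|X|}{2k}\cdot|Y'|\leq \sum_{y\in Y'}|N_X(y)|\leq k|X|$, delivers $|Y'|\leq 2k^2$ as required.

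The main assertion would then be reduced to a count of \emph{bad} pairs, i.e.\ pairs $(x,x')\in X\times X$ for which $N_Y(x)\cap N_Y(x')\not\subseteq Y'$. Every such pair contains some common neighbour $y\in Y\setminus Y'$, so the number of bad pairs is at most
\[\sum_{y\in Y\setminus Y'} |N_X(y)|^2 \;\leq\; \frac{|X|}{2k}\sum_{y\in Y\setminus Y'}|N_X(y)| \;\leq\; \frac{|X|}{2k}\cdot k|X| \;=\; \frac{|X|^2}{2},\]
where the first inequality uses the defining low-degree condition on the vertices outside $Y'$, and the second uses the edge-count above. Subtracting from $|X|^2$ yields the claimed lower bound of $|X|^2/2$ good pairs.

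This is essentially a textbook ``popularity threshold'' argument, and the only real decision is the value $|X|/(2k)$, which is dictated by balancing the upper bound for $|Y'|$ against the count of bad pairs so that both fit within the constants stated in the lemma. I do not foresee any genuine obstacle; the argument should go through as a short, self-contained paragraph.
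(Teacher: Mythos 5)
Your proof is correct and follows essentially the same route as the paper: you choose $Y'$ to be the vertices of degree at least $|X|/(2k)$, bound $|Y'|$ by the edge-count/averaging argument, and then bound the number of bad pairs by $\sum_{y\notin Y'}|N_X(y)|^2$ exactly as in the paper. No differences of substance.
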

\begin{proof}
Indeed, let $Y'=\{y\in Y:|N_X(y)|\geq |X|/2k\}$. Then
\[|Y'|\leq \frac{2k}{|X|}\sum_{y\in Y}|N_X(y)|=\frac{2k}{|X|}\sum_{x\in X}|N_Y(x)|\leq 2k^2.\] Let $Y''=Y\setminus Y'$ 
and observe that
\[\sum_{x,x'\in X}|N_{Y''}(x)\cap N_{Y''}(x')|=\sum_{y\in Y''}|N_X(y)|^2\leq \frac{|X|}{2k}\sum_{x\in X}|N_Y(x)|\leq \frac{|X|^2}{2},\]
and so at most half of the pairs $(x,x')\in X\times X$ have a common neighbour outside of $Y'$.
\end{proof}

\begin{Corollary}\label{FibredStructure}
    Let $A$ be a finite set of integers such that $\omega(a)\leq k$ for $a\in A$. Then there is a set $P=\{p_1,\ldots,p_r\}$ of at most
    $r\leq 2k^2$ primes such that 
    \[A=\bigcup_{\vv\in \NN_0^r}\pp^\vv\cdot B_\vv\]
    where $B_\vv$ is a set of integers prime to $p_1p_2\cdots p_r$ and such that \[\sum_{\vv,\vv'}|\{(b,b')\in B_\vv\times B_{\vv'}:\gcd(b,b')=1\}|\geq \frac{|A|^2}{2}.\]
\end{Corollary}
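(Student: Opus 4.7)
\textbf{Proof plan for Corollary \ref{FibredStructure}.} The plan is to apply Lemma \ref{GraphLemma} to a divisibility bipartite graph: let $X = A$ and let $Y$ be the set of primes dividing at least one element of $A$, with an edge $(a,p)$ whenever $p \mid a$. The hypothesis $\omega(a) \le k$ gives $|N_Y(a)| \le k$ for every $a \in X$, so the lemma produces a set $P = Y' \subseteq Y$ with $|P| \le 2k^2$ such that
\[
\big|\{(a,a') \in A\times A : N_Y(a)\cap N_Y(a') \subseteq P\}\big| \ge \frac{|A|^2}{2}.
\]
Fix an ordering $P = \{p_1,\ldots,p_r\}$ and set $\pp = (p_1,\ldots,p_r)$.

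Next, I would use unique factorization to decompose each $a \in A$ as $a = \pp^{\vv(a)} \cdot b(a)$, where $\vv(a) = (v_{p_1}(a),\ldots,v_{p_r}(a)) \in \NN_0^r$ is the vector of $P$-valuations and $b(a)$ is the $P$-free part, satisfying $\gcd(b(a),p_1\cdots p_r) = 1$. Define $B_\vv = \{b(a) : a\in A,\ \vv(a) = \vv\}$; this gives the claimed (disjoint) decomposition $A = \bigsqcup_{\vv \in \NN_0^r} \pp^\vv \cdot B_\vv$, and the map $a \mapsto (\vv(a),b(a))$ is a bijection between $A$ and $\bigsqcup_\vv (\{\vv\}\times B_\vv)$.

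Finally, I would translate the condition $N_Y(a)\cap N_Y(a') \subseteq P$ into a coprimality statement on the $P$-free parts. A prime $q \notin P$ divides both $a$ and $a'$ if and only if it divides both $b(a)$ and $b(a')$; since $b(a),b(a')$ are by construction coprime to $P$, the condition $N_Y(a)\cap N_Y(a') \subseteq P$ is exactly $\gcd(b(a),b(a')) = 1$. Under the above bijection, the lower bound from Lemma \ref{GraphLemma} then becomes
\[
\sum_{\vv,\vv'} \big|\{(b,b') \in B_\vv \times B_{\vv'} : \gcd(b,b') = 1\}\big| \;\ge\; \frac{|A|^2}{2},
\]
which is the desired inequality. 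The argument is essentially mechanical: the only nontrivial input is Lemma \ref{GraphLemma} itself, and the main (mild) conceptual point is recognizing that the graph-theoretic condition ``no common neighbour outside $P$'' translates, via unique factorization together with the $P$-freeness of the $b(a)$'s, to the coprimality $\gcd(b,b') = 1$.
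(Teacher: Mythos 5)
Your proposal is correct and follows essentially the same route as the paper: both apply Lemma \ref{GraphLemma} to the divisibility bipartite graph on $A$ and the primes dividing elements of $A$, then factor each $a$ into its $P$-part and $P$-free part, and translate ``no common prime outside $P$'' into coprimality of the $P$-free parts. The added remarks about the bijection $a\mapsto(\vv(a),b(a))$ are a harmless, slightly more explicit rendering of the same counting step.
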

\begin{proof}
    Let $X=A$ and $Y=\{p:p|a\text{ for some }a\in A\}$, and let $G=\{(a,p)\in X\times Y:p|a\}$. Applying Lemma \ref{GraphLemma}, we find $P$ with $|P|=r\leq 2k^2$ such that for at least half of all pairs $(a,a')\in A\times A$, all primes dividing both $a$ and $a'$ belong to $P$. Each $a\in A$ factors as
    \[a=b_a\prod_{p\in P}p^{v_p(a)}\] with $b_a$ coprime to each $p\in P$, and we thus partition $A$ according to the valuations $v_{p}(a)$ with $p\in P$ and we get \[A=\bigcup_{\vv\in \NN_0^r}\pp^\vv\cdot B_\vv\] for some finite sets of integers $B_\vv$ which are coprime to each $p\in P$. Furthermore, if $(a,a')$ is a pair for which all common prime factors do belong to $P$, then $b_a$ and $b_{a'}$ are coprime.
\end{proof}

In this way, we have taken an arbitrary set $A$ of integers and extracted from it a pseudo-product structure -- one factor of which is from a low-rank multiplicative group, the other of which is multiplicatively independent in the sense that the fibres are relatively prime. 
 Let $\vv_1$ be fixed, and observe that the sets 
\[\pp^{\vv_1}B_{\vv_1}\cdot \pp^{\vv_2}B_{\vv_2}=\pp^{\vv_1+\vv_2}B_{\vv_1}\cdot B_{\vv_2}\]
are disjoint as $\vv_2$ varies, since they are graded by the exponents $\vv_1+\vv_2$. Now suppose that there are $M(\vv_1,\vv_2)$ pairs $(b_1,b_2)\in B_{\vv_1}\times B_{\vv_2}$ which are relatively prime. Then,
\[\sum_{\vv_1}\sum_{\substack{\vv_2\\ |B_{\vv_1}|\geq|B_{\vv_2}|}}M(\vv_1,\vv_2)\geq \frac{1}{2}\sum_{\vv_1,\vv_2}M(\vv_1,\vv_2)\geq \frac{|A|^2}{4},\]
from the conclusion of Corollary \ref{FibredStructure}. But
\[\sum_{\vv_1}|B_{\vv_1}|\sum_{\substack{\vv_2\\ |B_{\vv_2}|\leq |B_{v_1}|}}\frac{M(\vv_1,\vv_2)}{|B_{\vv_1}|}\leq |A|\max_{\vv_1}\frac{1}{|B_{\vv_1}|}\sum_{\substack{\vv_2\\ |B_{\vv_2}|\leq |B_{v_1}|}}M(\vv_1,\vv_2),\]
so that for some choice of $\vv_1$, we have \[\sum_{\substack{\vv_2\\ |B_{\vv_2}|\leq |B_{v_1}|}}M(\vv_1,\vv_2)\geq \frac{|A||B_{\vv_1}|}{4}.\]
Let $V'=\{\vv_2:|B_{\vv_2}|\leq |B_{\vv_1}|\}$ and \[A'=\bigcup_{\vv_2\in V'}\pp^{\vv_2}\cdot B_{\vv_2},\]
so that \[
    |A'|\geq \sum_{\vv_2\in V'}|B_{\vv_2}|\geq \sum_{\vv_2\in V'}\frac{M(\vv_1,\vv_2)}{|B_{\vv_1}|}\geq \frac{|A|}{4}.
\]
Further observe that on coprime pairs, the map $(b_1,b_2)\mapsto b_1b_2$ is at most $4^{k}$-to-one by Lemma \ref{ProductBound}, so that \begin{equation}\label{ThreeHalvesProduct}|A\cdot A|\geq|\pp^{\vv_1}B_{\vv_1}\cdot A'|\geq \sum_{\vv_2\in V'}|\pp^{\vv_1+\vv_2}B_{\vv_1}\cdot B_{\vv_2}|\geq \frac{1}{4^k}\sum_{\vv_2\in V'}M(\vv_1,\vv_2)\geq \frac{|A||B_{\vv_1}|}{4^{k+1}}.\end{equation}

This is a good estimate if $B_{\vv_1}$ is sufficiently large. If not, we must resort to growth from addition. We begin with the aforementioned lemma of Chang, which in the special case we need, requires no Fourier analysis.

\begin{Lemma}[Chang]\label{EasyChang} Let $A$ be a finite set of integers admitting a decomposition of the form 
\[ A=\bigcup_{\vv\in\NN_0^r} \pp^{\vv} B_\vv,\] where each $B_\vv$ is a finite set of integers coprime with $p_1,\ldots,p_r$. Then \[
E_+(A,A)^{1/2} \ll_r  \sum_{\vv} E_+(B_\vv,B_\vv)^{1/2}.\]
In particular,
\[E_+(A,A)\ll_r |A|^2\max_\vv |B_\vv|.\]
\end{Lemma}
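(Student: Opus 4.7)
The plan is to pass to the Fourier side. Set $f_X(\theta)=\sum_{x\in X}e(x\theta)$ on $\TT$, so that $E_+(X,X)=\|f_X\|_{L^4}^4$ for any finite $X\subset\ZZ$, and for each multi-index $\vv$ put $g_\vv(\theta):=f_{B_\vv}(\pp^\vv\theta)$; then $f_A=\sum_\vv g_\vv$. Two features of this decomposition are crucial. First, the Fourier support of $g_\vv$ is $\pp^\vv B_\vv$, and the hypothesis $\gcd(b,p_j)=1$ for $b\in B_\vv$ ensures that $\vv$ is recoverable from the tuple $(v_{p_j}(n))_j$ attached to any such frequency $n$; hence the supports are pairwise disjoint as $\vv$ varies. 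Second, dilation by a nonzero integer is an $L^q(\TT)$-isometry, so $\|g_\vv\|_{L^4}=\|f_{B_\vv}\|_{L^4}=E_+(B_\vv,B_\vv)^{1/4}$.

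The core step is a Littlewood--Paley type square-function inequality
\[\|f_A\|_{L^4}\ \ll_r\ \Big\|\Big(\sum_\vv |g_\vv|^2\Big)^{1/2}\Big\|_{L^4}.\]
For a single prime ($r=1$), this is the classical Littlewood--Paley theorem applied to the $p$-adic decomposition of $\ZZ\setminus\{0\}$ by the valuation $v_p$, which is the Fourier-side avatar of the natural martingale on $\ZZ_p$. For general $r$ one iterates prime by prime, picking up a constant factor at each of the $r$ stages, which is absorbed into $\ll_r$.

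Granted this, I would finish with Minkowski's inequality in $L^2$:
\[\Big\|\Big(\sum_\vv |g_\vv|^2\Big)^{1/2}\Big\|_{L^4}^2 \;=\; \Big\|\sum_\vv |g_\vv|^2\Big\|_{L^2} \;\leq\; \sum_\vv \|g_\vv\|_{L^4}^2 \;=\; \sum_\vv E_+(B_\vv,B_\vv)^{1/2},\]
and squaring the square-function bound gives the first assertion $E_+(A,A)^{1/2}\ll_r\sum_\vv E_+(B_\vv,B_\vv)^{1/2}$. The ``in particular'' estimate then follows at once from the trivial bound $E_+(B_\vv,B_\vv)\leq |B_\vv|^3$: summing yields $\sum_\vv E_+(B_\vv,B_\vv)^{1/2}\leq \max_\vv |B_\vv|^{1/2}\sum_\vv |B_\vv|=|A|\,\max_\vv|B_\vv|^{1/2}$, and one squares.

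The hard part is the square-function step; everything else is soft. Verifying that the joint $p$-adic-valuation decomposition really is a Littlewood--Paley decomposition is the genuine content, and is handled rigorously via Burkholder's martingale $L^p$ inequality applied to the natural filtration on $\prod_j\ZZ_{p_j}$---precisely the machinery the authors develop further in their Theorem~\ref{SquareFunction} to obtain the sharper bound needed for the main theorem.
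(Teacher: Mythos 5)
Your proof is correct, but it takes a genuinely different route from the paper's: you route through the Littlewood--Paley square-function estimate (i.e.\ through Burkholder's martingale inequality, the content of the paper's Theorem~\ref{SquareFunction}), whereas the paper proves Lemma~\ref{EasyChang} itself by a completely elementary argument with no Fourier analysis at all. The paper's proof fixes a prime $p=p_1$, observes that in any solution $a_1-a_2=a_3-a_4$ in $A=\bigcup_v p^v B_v$ the smallest $p$-adic valuation among the four $a_i$ must occur at least twice (by reducing mod $p^{v_p(a_1)+1}$), bounds $E_+(A,A)$ by $\sum_v E_+(p^vB_v,A)$ up to a constant, applies Cauchy--Schwarz and cancels $p^v$ to get $E_+(A,A)\ll \sum_v E_+(B_v,B_v)^{1/2}E_+(A,A)^{1/2}$, rearranges, and iterates over the $r$ primes. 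Your argument instead establishes $\|f_A\|_{L^4}\ll_r\|(\sum_\vv|g_\vv|^2)^{1/2}\|_{L^4}$ and then concludes via Minkowski in $L^2$; this is, in effect, first proving the paper's Corollary~\ref{EnergyDecomposition} and then deducing Chang's estimate from it, which the paper explicitly points out is possible (``Corollary~\ref{EnergyDecomposition}, when coupled with the Cauchy--Schwarz inequality, recovers Chang's original estimate''). What you gain is that you automatically get the sharper pairwise-energy decomposition needed for the main theorem; what the paper's proof buys is a short, self-contained argument requiring nothing but counting and Cauchy--Schwarz, which is why the authors present it first as a warm-up before developing the heavier machinery. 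One small caution on your write-up: the square-function inequality for the joint $p$-adic valuation partition is not the ``classical'' Littlewood--Paley theorem (that concerns dyadic frequency annuli), so as you yourself note at the end it genuinely requires Burkholder plus the iteration lemma (Lemma~\ref{Iteration}) to handle $r>1$; stating it as classical in the $r=1$ case slightly undersells what must be proved.
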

\begin{proof} For convenience, replace $A$ with $A\cup -A$ so as to assume $A=-A$. Let $p=p_1$. Consider the equation 
\[a_1-a_2=a_3-a_4\,\]
with variables in $A$. Suppose, $a_1$ has the minimum $p$-adic valuation $v_p(a_1)$ among all $v_p(a_i)$. Then, by reducing $a_1=a_2+a_3-a_4$ modulo $p^{v_p(a_1)+1}$, we see a second term in the equation must have the same $p$-adic valuation, and at the cost of a constant factor (from rearrangement), this term is $a_2$.

It follows that 
\[E_+(A,A) \ll\sum_v E(p^v B_v, A)\ll  \sum_v E_+ (B_v, B_v)^{1/2} E_+(A,A)^{1/2},\]
 by Cauchy-Schwarz (applied to the additive energy) and cancelling $p^v$. Rearranging, and applying the resulting estimate for each prime from $\{p_1,\ldots,p_r\}$, we find
 \[E_+(A,A)^{1/2}\ll_r\sum_{\vv}E(B_\vv,B_\vv)^{1/2}.\]

 The final claim comes from applying the trivial estimate, \[E_+(B_\vv,B_\vv)\leq |B_\vv|^3\] 
 to each summand, whence
 \[E_+(A,A)^{1/2}\ll_r \max_{\vv}|B_\vv|^{1/2}\sum_{\vv}|B_\vv|=\max_{\vv}|B_\vv|^{1/2}|A|.\]
 \end{proof}

From Chang's Lemma applied to the set $A'$ from above, we find that \[|A+A|\geq |A'+A'|\gg_k \frac{|A'|^2}{|B_{\vv_1}|}\gg \frac{|A|^2}{|B_{\vv_1}|}.\] This, combined with (\ref{ThreeHalvesProduct}) we obtain the sum-product estimate with exponent $3/2$. Moreover, by tracking the dependence in the proof of Chang's Lemma, the implicit constant is singly exponential in $r$, which is at most $2k^2$.
\section{A refined structure theorem}

Our refined structure theorem will be similar in spirit to Corollary \ref{FibredStructure} but we shall seek to make the fibres $B_\vv$ identical. Such a claim is, in general, too good to be true, but can be made legitimate by \textit{covering} a subset $\tilde A\subseteq A$ by a structured set. Ultimately, we will estimate the additive energy of $\tilde A$, which is increasing on sets, and hence a covering will suffice. As concerns the multiplicative structure of $A$, it turns out in our case that we will want to regularize with respect to a dual decomposition of $A$ as shall be made explicit in the following lemma. 

\begin{Lemma}[Iteration Lemma]
    Suppose $A$ is a finite set of integers with the property $\omega(a)\leq k$ for $a\in A$. Let $p_1,\ldots,p_j$ be distinct primes, and suppose $A$ decomposes as \[A=\bigcup_{b\in B}b \cdot \Gamma_b,\]
    for some set $B$ of integers coprime to $p_1\cdots p_j$, and sets $\Gamma_b\subseteq \ang{p_1,\ldots, p_j}_+$.
    Then one of the following holds:
    \begin{enumerate}
        \item either $|B|\leq 2k$ or at least half of the pairs $(a,a')\in A\times A$ satisfy $\gcd(a,a')\in \ang{p_1,\ldots,p_j}$, or
        \item there is a prime $p_{j+1}$, distinct from $p_1,\ldots,p_j$, and a subset $\tilde A\subseteq A$ of size \[|\tilde A|\geq \frac{|A|}{2(k-j)\log_2 |A|},\] and having the form 
        \[\tilde A=\bigcup_{b\in \tilde B}b \cdot \tilde \Gamma_b\]
        for some set $\tilde B$ of integers coprime to $p_1\cdots p_{j+1}$, and sets $\tilde \Gamma_b\subseteq \ang{p_1,\ldots,p_{j+1}}_+$ with sizes satisfying $L\leq |\tilde \Gamma_b|\leq 2 L$ for some $ L\geq 1$.
    \end{enumerate}
\end{Lemma}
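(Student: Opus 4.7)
My plan is to argue the contrapositive: assume (1) fails, so that $|B|>2k$ and more than $|A|^2/2$ ordered pairs $(a,a')\in A\times A$ have $\gcd(a,a')\notin\ang{p_1,\ldots,p_j}$, and then produce $p_{j+1}$ and $\tilde A$ as in (2).

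First, I would translate the hypothesis into a condition on the coprime parts. Each $a\in A$ factors uniquely as $a=b\gamma$ with $b\in B$ coprime to $p_1\cdots p_j$ and $\gamma\in\ang{p_1,\ldots,p_j}_+$, just by splitting off the maximal powers of $p_1,\ldots,p_j$ dividing $a$. Consequently $\gcd(a,a')\in\ang{p_1,\ldots,p_j}$ if and only if $\gcd(b,b')=1$, so the failure of (1) says that more than half of the pairs $(a,a')$ have coprime parts sharing some prime $p$ outside $\{p_1,\ldots,p_j\}$.

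Next, I would extract a popular new prime by a union bound. For primes $p\notin\{p_1,\ldots,p_j\}$ set $A_p=\{a\in A:p\mid a\}$; then
\[
\frac{|A|^2}{2}\leq\sum_p|A_p|^2\leq\max_p|A_p|\cdot\sum_p|A_p|,
\]
and $\sum_p|A_p|\leq(k-j)|A|$ since each $a$ contributes once per prime factor outside $\{p_1,\ldots,p_j\}$, of which there are at most $k-j$ under the iteration invariant that every $a\in A$ is divisible by each of $p_1,\ldots,p_j$. Taking $p_{j+1}$ to realize the maximum gives $|A_{p_{j+1}}|\geq|A|/(2(k-j))$. A dyadic pigeonhole on the fibre sizes $|\tilde\Gamma_{\tilde b}|$ arising from the finer factorization $a=\tilde b\tilde\gamma$ with $\tilde b$ coprime to $p_1\cdots p_{j+1}$ and $\tilde\gamma\in\ang{p_1,\ldots,p_{j+1}}_+$ (these fibre sizes lie in $[1,|A|]$, so there are at most $\log_2|A|$ dyadic levels) then yields some $L\geq 1$, a subset $\tilde B$, and $\tilde A=\bigcup_{\tilde b\in\tilde B}\tilde b\cdot\tilde\Gamma_{\tilde b}$ with
\[
|\tilde A|\geq\frac{|A_{p_{j+1}}|}{\log_2|A|}\geq\frac{|A|}{2(k-j)\log_2|A|},
\]
as required.

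The delicate point is the bound $\sum_p|A_p|\leq(k-j)|A|$, which really requires that every $a\in A$ is divisible by each of $p_1,\ldots,p_j$. This is the natural invariant maintained by iterating the lemma (one starts at $j=0$ with $B=A$ and at each step passes to $A_{p_{j+1}}$, a subset divisible by the new prime), though it is not flagged as a standing hypothesis in the statement. Absent it one would lose only a factor of $k/(k-j)$, which is harmless for the downstream argument, so I expect the iteration context to justify the stronger bound with no further work.
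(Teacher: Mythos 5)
Your proof is correct and follows essentially the same route as the paper's: the same double-count over primes (your $A_p$ is exactly the paper's $N_A(p)$ in its bipartite-graph notation), leading to $\max_p|A_p|\geq |A|/(2(k-j))$, followed by the same dyadic pigeonhole on the fibre sizes after splitting off powers of $p_{j+1}$. You are also right that the bound $|N_P(a)|\leq k-j$ tacitly uses the invariant that $p_1\cdots p_j$ divides every element of $A$; the paper relies on the same unstated hypothesis, and it is indeed preserved by the iteration in the proof of the structure theorem, so your caveat matches the paper's intended usage.
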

\begin{proof}
    Let $P_A$ denote the set of primes which divide some element of $A$, and let \[P=P_A\setminus\{p_1,\ldots,p_j\}.\] Consider the bipartite graph \[G=\{(a,p)\in A\times P:p|a\}\] and observe that  $\gcd(a,a')\not\in\ang{p_1,\ldots,p_j}$ if and only if $a$ and $a'$ have a common neighbour in $P$. Now, denoting $N_P(a)$ the neighbours of $a$ in $P$, we have $|N_P(a)|\leq k-j$, since each $a$ has at most $k$ prime factors, and $j$ of those are $p_1,\ldots,p_j$. Thus, writing $N_A(p)$ for the neighbours of $p$ in $A$, we find by double counting that
    \[(k-j)|A|\geq \sum_{a\in A}|N_P(a)|= \sum_{p\in P}|N_A(p)|\geq \frac{1}{\max_p |N_A(p)|}\sum_{p\in P}|N_A(p)|^2.\]
    If (1) fails and $|B|>2k$, then the rightmost sum above is at least $|A|^2/2$ and so there is some $p_{j+1}\in P$ with $|N_A(p_{j+1})|\geq |A|/2(k-j)$. Let $A'=N_A(p_{j+1})$ so that $A'$ further decomposes (by factoring out the appropriate powers of $p_{j+1}$) as
    \[A'=\bigcup_{b'\in B'}b'\cdot \tilde \Gamma_{b'}\]
    where \[B'=\{p_{j+1}^{-v_{j+1}(b)}b:b\in B,\ p_{j+1}|b\},\] where $v_{j+1}(b)$ is the $p_{j+1}$-adic valuation of $b$.

    We now use dyadic pigeonholing:
    \[\frac{|A|}{2(k-j)}\leq \sum_{b'\in B'}|\tilde \Gamma_{b'}|\leq \sum_{l\leq \log_2 |A|}2^l|B_l'|\]
    where
    \[B_l'=\{b'\in B':2^l\leq |\tilde \Gamma_{b'}|<2^{l+1}\}.\]
    Thus, choosing a value of $l$ with maximal summand and setting $L=2^l$ and $\tilde B=B_l'$, we arrive at some subset $\tilde A\subseteq A'$ of the form \[\tilde A=\bigcup_{b\in \tilde B}b\cdot \tilde \Gamma_b\]
    of size at least $|\tilde A|\geq \frac{|A|}{2(k-j)\log_2 |A|}$, and with the property that each fibre $\tilde \Gamma_b$ has size in $[L,2 L]$.
\end{proof}

\begin{Theorem}\label{RegularFibredStructure}
    Suppose $A$ is a finite set of integers such what $\omega(a)\leq k$ for $a\in A$ and $|A\cdot A|\leq K|A|$. Then there is a set $P=\{p_1,\ldots,p_r\}$ of at most $r\leq k$ primes, and a set $\tilde A\subseteq A$ of size at least $|A|/(2^kk!(\log_2|A|)^k)$ and with the structural decompositions
    \[\tilde A= \bigcup_{\vv\in V}\pp^\vv \cdot B_\vv=\bigcup_{b\in B}b\cdot \Gamma_b,\]
    where each set $B_\vv$ a finite set of integers prime to $p_1\cdots p_r$, $B=\bigcup_{\vv\in V}B_\vv$ satisfies $|B|\leq 4^{k+2}K|A|/|\tilde A|$, and each set $\Gamma_b$ is a subset of $\ang{p_1,\dots,p_r}$ of size $L\leq |\Gamma_b|\leq 2L$ for an appropriate value of $L$.
\end{Theorem}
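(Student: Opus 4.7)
The plan is to iterate the Iteration Lemma, peeling off one prime per round until termination, and then invoke the hypothesis $|A\cdot A|\leq K|A|$ to bound $|B|$. Starting from the trivial decomposition $A=\bigcup_{b\in A}b\cdot\{1\}$ (so $B=A$, $\Gamma_b=\{1\}$, no primes), at each stage $j$ I apply the Iteration Lemma: in case (1) I stop, while case (2) provides a new prime $p_{j+1}$, a subset $\tilde A_{j+1}\subseteq\tilde A_j$ with refined fibres of uniform size in $[L_{j+1},2L_{j+1}]$, and an updated decomposition satisfying the hypotheses for the next round. A crucial observation is that the construction in case (2) restricts to $N_{\tilde A_j}(p_{j+1})$, so by induction every element of $\tilde A_j$ is divisible by $p_1\cdots p_j$. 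This validates the bound $|N_P(a)|\leq k-j$ used in the proof of the Iteration Lemma and forces the iteration to halt after $r\leq k$ rounds, since case (2) becomes impossible once $k-j=0$.

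Multiplying the size estimates from each round (using $|\tilde A_j|\leq|A|$) yields
\[
|\tilde A|\geq \frac{|A|(k-r)!}{(2\log_2|A|)^r\, k!}\geq \frac{|A|}{2^k k! (\log_2|A|)^k},
\]
matching the claimed lower bound. The decomposition $\tilde A=\bigcup_{b\in B} b\cdot\Gamma_b$ with $L\leq|\Gamma_b|\leq 2L$ is given directly by the final round, and the dual decomposition $\tilde A=\bigcup_{\vv\in V}\pp^\vv\cdot B_\vv$ arises by regrouping: let $V$ collect the exponent vectors occurring in $\bigcup_b\Gamma_b$ and set $B_\vv=\{b\in B:\pp^\vv\in\Gamma_b\}$, which is automatically coprime to $p_1\cdots p_r$ and satisfies $B=\bigcup_\vv B_\vv$ since each $\Gamma_b$ is non-empty.

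It remains to bound $|B|$, depending on which sub-case of (1) halts the iteration. If $|B|\leq 2k$, the bound is immediate since $|\tilde A|\leq|A|$ and $K\geq 1$ give $4^{k+2}K|A|/|\tilde A|\geq 4^{k+2}\geq 2k$. Otherwise at least $|\tilde A|^2/2$ pairs $(a_1,a_2)\in\tilde A\times\tilde A$ satisfy $\gcd(a_1,a_2)\in\ang{P}$, i.e., their $b$-parts are coprime. For any such pair, the product splits uniquely as $a_1 a_2=(b_1 b_2)(\gamma_1\gamma_2)$ with $b_1 b_2$ coprime to $P$; given the product, Lemma \ref{ProductBound} allows at most $4^k$ coprime factorizations of the $P$-free part, and for each such factorization there are at most $|\Gamma_{b_1}|\leq 2L$ choices of $\gamma_1$ (with $\gamma_2$ forced). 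Hence the restricted product map is at most $2\cdot 4^k L$-to-one, giving
\[
K|A|\geq |\tilde A\cdot\tilde A|\geq \frac{|\tilde A|^2}{4^{k+1}L}\geq \frac{|\tilde A|\, |B|}{4^{k+1}},
\]
where the last step uses $|\tilde A|\geq L|B|$. Rearranging yields $|B|\leq 4^{k+1}K|A|/|\tilde A|\leq 4^{k+2}K|A|/|\tilde A|$, as required. The main technical obstacle is this final multiplicity count, which crucially combines Lemma \ref{ProductBound} with the fibre uniformity from the dyadic pigeonhole step in the Iteration Lemma; the rest of the argument is essentially careful bookkeeping.
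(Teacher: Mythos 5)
Your proof is correct and follows essentially the same strategy as the paper: iterate the Iteration Lemma until clause (1) triggers, use $\omega(b)\leq k-j$ on $B_j$ to force termination in $r\leq k$ rounds, multiply the per-round losses to get the size bound, and use the coprimality conclusion of clause (1) together with Lemma \ref{ProductBound} to bound $|B|$. The one place you diverge is the final multiplicity count: the paper first shows $|\tilde A\cdot\tilde A|\geq |B\cdot B|\,L$ via the disjoint graded union $\tilde A\cdot\tilde A=\bigcup_{q\in B\cdot B} q\cdot\bigcup_{bb'=q}\Gamma_b\Gamma_{b'}$, then separately lower-bounds $|B\cdot B|$ by $|B|^2/(8\cdot 4^k)$ using the coprime-pair count and Lemma \ref{ProductBound}; you instead bound the fibre size of the product map $(a_1,a_2)\mapsto a_1a_2$ directly on pairs with $\gcd(a_1,a_2)\in\ang P$, at $2\cdot 4^k L$-to-one, and couple this with $|\tilde A|\geq L|B|$. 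Both arguments are valid and essentially the same calculation; yours is a bit more streamlined and even yields the marginally sharper constant $4^{k+1}$ in place of $4^{k+2}$.
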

\begin{proof}
Iteratively apply the Iteration Lemma, beginning with $A_0=B_0=A$, $j=0$ and each $\Gamma_b=\{1\}$, continuing until conclusion (1) is satisfied. We obtain a sequence of distinct primes $p_1,\ldots,p_r$, a sequence of sets $A_0\supseteq \cdots \supseteq A_r$ such that $p_1\cdots p_i$ divides each element of $A_i$, and sets $B_1,\ldots,B_r$ of integers such that $B_i$ is coprime to $p_1\cdots p_i$. Here $A_{i+1}=\tilde{A_i}$, $B_{i+1}=\tilde B_i$ in the notation of conclusion (2) of the Lemma. Since $p_1\cdots p_i$ divides every element of $A_i$, and $B_i$ is coprime to $p_1\cdots p_i$, we have $\omega(b)\leq k-i$ for $b\in B_i$. Consequently, conclusion (1) must hold after $k$ iterations, so $r\leq k$. 

We set $B=B_r$ and $\tilde A=A_r$, so that we have \[\tilde A=\bigcup_{b\in B}b \cdot \Gamma_b\]
for some sets $\Gamma_b\subseteq \ang{p_1,\ldots, p_{j+1}}_+$ with sizes satisfying $L\leq |\Gamma_b|\leq 2L$ for some $L\geq 1$. We also have the estimate
\[|\tilde A|\geq \frac{|A|}{2^k k!(\log_2|A|)^k}.\] To get the other form of $\tilde A$ stated in the theorem, consider the decomposition \[\tilde A=\bigcup_{\vv\in V}\pp^{\vv}B_\vv\]
where $\pp=(p_1,\ldots,p_r)$, $V=\{(v_{p_1}(a),\ldots,v_{p_r}(a)):a\in \tilde A\}$ and \[B_\vv=\{b\in B:\pp^\vv\in \Gamma_b\}.\] 

We now estimate $|B|$. Now, the union $\tilde A=\bigcup_{b\in B}b\cdot \Gamma_b$ is disjoint and we have
\[\tilde A\cdot \tilde A=\bigcup_{q\in B\cdot B}q\cdot \bigcup_{bb'=q}\Gamma_b\cdot \Gamma_{b'},\] where the outer union is also disjoint. Indeed $B$ and $B\cdot B$, respectively, consist of coset representatives for $\tilde A$ and $\tilde A\cdot \tilde A$ with regard to the group $\ang{p_1,\ldots,p_r}$. Thus, from the trivial bound $|\Gamma_b\cdot \Gamma_{b'}|\geq L$, we see 
\[K|A|\geq |\tilde A\cdot \tilde A|\geq |B\cdot B|L\geq \frac{|B\cdot B||\tilde A|}{2|B|}.\]
Suppose $b,b'\in B$ are such that there is some pair $(a,a')\in (b\cdot \Gamma_b)\times (b'\cdot \Gamma_{b'})$ satisfying $\gcd(a,a')\in \ang{p_1,\ldots,p_r}$. Then it must be that $\gcd(b,b')=1$, and so 
\[|\{(b,b')\in B\times B:\gcd(b,b')=1\}|\geq \frac{1}{4L^2}|\{(a,a')\in \tilde A\times \tilde A:\gcd(a,a')\in\ang{p_1,\ldots,p_r}\}|.\]
Since the iteration can only have terminated because (1) was satisfied, it is either the case that $|B|\leq 2k$, which is stronger than promised, or that the right hand side above is at least $|\tilde A|^2/8L^2$. In the latter case Lemma \ref{ProductBound} tells us that 
\[|B\cdot B|\geq \frac{|\tilde A|^2}{8L^24^k}\geq \frac{|B|^2}{8\cdot 4^{k}},\]
and hence
\[K|A|\geq \frac{|B||\tilde A|}{4^{k+2}}.\]
\end{proof}

\section{Fourier Analysis}

Chang's estimate, Lemma \ref{EasyChang}, could be described as an estimate for the $\Lambda(q)$ constant for subsets of a multiplicative group generated by a finite set of primes (often called $S$-units). However, the estimate is a bit crude in some cases, the result of an application of H\"older's inequality which is at times inefficient. The next ingredient in our proof is a square-function estimate for a Littlewood-Paley decomposition along a sequence of multiples. The ultimate goal of this section is to prove the following theorem.
\begin{Theorem}\label{SquareFunction}
    Let $\pp=(p_1,\ldots,p_r)$ be an $r$-tuple of distinct primes and let $f(t)=\sum_{a\in A}\hat f(a)e(at)$ be a trigonometric polynomial whose Fourier coefficients are supported in a set $A$ of the form
    \[A=\bigcup_{\vv\in V}\pp^\vv B_{\vv},\]
    where each set $B_\vv$ is a set of integers coprime to $p_1\cdots p_r$.
    Define \[f_\vv(t)=\sum_{b\in B_\vv}\hat f(\pp^\vv b)e(\pp^\vv bt).\]
Then for any $q$ with $1<q<\infty$, there is a constant $C_q>0$ such that
\[\|f\|_{L^{q}}\leq C_q^r\left\|\lr{\sum_{\vv}\left|f_\vv\right|^2}^{1/2}\right\|_{L^{q}}.\]
\end{Theorem}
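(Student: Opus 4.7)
I plan to reduce the statement to a single-prime ($r=1$) square function estimate, which I then prove using Burkholder's martingale Littlewood--Paley inequality together with the translation-averaging device of Gundy and Varopoulos, both referenced in the introduction. The reduction from general $r$ down to $r = 1$ is then carried out one prime at a time in an iterative fashion.

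\textbf{Duality and randomization.} Set $Sf = (\sum_\vv |f_\vv|^2)^{1/2}$. Because the $f_\vv$ have disjoint Fourier supports, the pairing $\langle f, g\rangle = \sum_\vv \langle f_\vv, P_\vv g\rangle$ (with $P_\vv$ the Fourier projection onto integers $n$ with $(v_{p_1}(n),\ldots,v_{p_r}(n)) = \vv$) obeys
\[|\langle f, g\rangle| \leq \int Sf(t)\,Sg(t)\,dt \leq \|Sf\|_{L^q}\|Sg\|_{L^{q'}}\]
by Cauchy--Schwarz and H\"older. Hence, for $1 < q < \infty$, the claimed inequality is equivalent by duality to the \emph{upper} square function bound $\|Sg\|_{L^{q'}} \leq C_{q'}^r \|g\|_{L^{q'}}$ for every trigonometric polynomial $g$. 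Applying Khintchine's inequality pointwise in $t$ to the family $\{P_\vv g(t)\}_\vv$, this upper bound is in turn equivalent to the uniform $L^{q'}$-boundedness (with constant $\leq C_{q'}^r$) of the randomized Fourier multipliers $T_\epsilon g = \sum_\vv \epsilon_\vv P_\vv g$ as $\epsilon = (\epsilon_\vv)$ ranges over sign sequences.

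\textbf{Single-prime core and iteration.} For a single prime $p$, take the filtration $\{\mathcal{F}_v\}_{v\geq 0}$ on $\TT = [0, 1)$ generated by intervals of length $p^{-v}$. Burkholder's martingale transform theorem yields uniform $L^{q'}$-boundedness of $g \mapsto \sum_v \epsilon_v (E_v g - E_{v-1} g)$, where $E_v$ is conditional expectation with respect to $\mathcal{F}_v$. The Fourier projection $P^{(p)}_v$ onto integers of exact $p$-adic valuation $v$ is not itself a martingale difference, but -- this is the Gundy--Varopoulos observation -- it is realized as the average over translates of $\TT$ of the martingale differences for the correspondingly translated filtration; the Burkholder bound then transfers to $P^{(p)}_v$ via translation-invariance of $\|\cdot\|_{L^{q'}}$ and Minkowski's integral inequality, with no extra loss. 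To reach general $r$, I iterate this one prime at a time, applying the single-prime estimate at each stage in its vector-valued ($\ell^2$-valued) form; this remains valid because Burkholder's inequality extends to Hilbert-space-valued martingales (equivalently, $\ell^2$ is a UMD space). Each stage costs a factor of $C_{q'}$, producing the claimed constant $C_{q'}^r$ after $r$ iterations.

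\textbf{Expected obstacle.} The delicate point is the rigorous realization of $P^{(p)}_v$ as a translation-average of the differences $E_v - E_{v-1}$: $P^{(p)}_v$ is governed by a property of the frequency $n$ (its $p$-adic valuation), whereas $E_v - E_{v-1}$ is governed by a property of the argument $t$ (the $p^{-v}$-interval containing it), and reconciling these via averaging must be done carefully so that no constant worse than the one furnished by Burkholder's theorem appears. Once the single-prime case is secured, the remaining steps -- duality, Khintchine's inequality, and the vector-valued iteration -- are essentially mechanical and contribute only the geometric constant $C_{q'}^r$.
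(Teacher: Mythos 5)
Your proof is correct and, in broad outline, mirrors the paper's: both reduce the lower square-function bound to the upper one by duality, translate the upper bound into boundedness of random sign multipliers via Khintchine, and anchor everything on Burkholder's martingale transform inequality transferred to the $p$-adic Fourier multiplier via the Gundy--Varopoulos device (which the paper, like you, cites rather than reproves). The genuine divergence is in the iteration from one prime to $r$ primes. You propose iterating the single-prime estimate in its $\ell^2$-valued form, invoking the UMD property (equivalently, Hilbert-space-valued Burkholder). The paper instead proves a scalar ``Iteration Lemma'' (Lemma \ref{Iteration}): given two partitions $\cP_1,\cP_2$ whose square functions are $L^q$-bounded, the common refinement's square function is also bounded, proved for $q\ge2$ by writing $(S_{\cP}f)^2=\EE_\eps\big[(S_{\cP_1}f_\eps)^2\big]$ for $\eps$ random and constant on parts of $\cP_2$, then applying Jensen, and for $q<2$ by duality. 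Both routes are valid; your vector-valued route is the standard textbook approach, while the paper's randomization-plus-Jensen trick keeps everything scalar and avoids having to invoke vector-valued martingale theory explicitly. The ``expected obstacle'' you flag (realizing the $p$-adic-valuation projection as a translation average of martingale differences) is real, but it is exactly the content of Gundy--Varopoulos, and the paper leaves it to that citation just as you do, so it is not a gap.
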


At least when $r=1$ this 
result 
is a fairly straightforward consequence of Burkholder's martingale Littlewood-Paley theorem. Each of \cite{EG}, \cite{Pisier}, and \cite{Stein} have readable expositions. In fact, the conclusion above was previously observed as a consequence of Burkholder's theorem by Gundy and Varapoulos in \cite{GV}. Experts familiar with such square-function estimates should feel free to skip the rest of this section, where we present some of the salient points of the proof. This exposition is presented partly for the sake of completeness, although we do not give a proof of Burkholder's theorem, and in order to have the necessary facts combined in a single source (they can be also located in various parts of \cite{EG}). Beyond specializing these facts to the application at hand, no originality is claimed. 

Applying Theorem \ref{SquareFunction} with $q=4$ and $\hat f=\one_A$, we have, we have the following refinement to Chang's energy estimate.
\begin{Corollary}\label{EnergyDecomposition}
    Let $A\subseteq \ZZ$ be a finite set of the form
    \[A=\bigcup_{\vv\in V}\pp^\vv B_{\vv}.\]
    Then 
    there is a constant $C>0$ 
    \[E(A,A)\leq C^r \sum_{\vv,\vv'\in V}E(\pp^\vv B_{\vv},\pp^{\vv'} B_{\vv'}).\]
\end{Corollary}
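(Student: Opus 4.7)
The plan is to simply apply Theorem \ref{SquareFunction} with $q=4$ to the trigonometric polynomial whose Fourier coefficients encode the indicator of $A$, and then recognize both sides of the resulting inequality as additive energies via Parseval.

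Concretely, I would set $f(t)=\sum_{a\in A}e(at)$, so that $\hat f=\one_A$ and, since the union $A=\bigsqcup_\vv \pp^\vv B_\vv$ is disjoint (by unique factorization, the exponent vector $\vv$ is determined by the $p_i$-adic valuations of $a\in A$ and $B_\vv$ is coprime to $p_1\cdots p_r$), we have the pointwise identity $f=\sum_\vv f_\vv$ in the notation of Theorem \ref{SquareFunction}. The standard orthogonality calculation gives
\[
\|f\|_{L^4}^4=\int_0^1\Bigl|\sum_{a\in A}e(at)\Bigr|^4 dt=|\{(a_1,a_2,a_3,a_4)\in A^4:a_1+a_3=a_2+a_4\}|=E(A,A).
\]

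For the square-function side, I expand
\[
\Bigl\|\Bigl(\sum_\vv |f_\vv|^2\Bigr)^{1/2}\Bigr\|_{L^4}^4=\int_0^1\Bigl(\sum_\vv |f_\vv(t)|^2\Bigr)^2 dt=\sum_{\vv,\vv'\in V}\int_0^1 |f_\vv(t)|^2|f_{\vv'}(t)|^2dt.
\]
Each summand equals $\|f_\vv f_{\vv'}\|_{L^2}^2$, and since $f_\vv f_{\vv'}$ has Fourier coefficients given by the convolution $\one_{\pp^\vv B_\vv}*\one_{\pp^{\vv'}B_{\vv'}}$, i.e.\ the representation function of the sumset $\pp^\vv B_\vv+\pp^{\vv'}B_{\vv'}$, Parseval identifies this $L^2$ norm squared with $E(\pp^\vv B_\vv,\pp^{\vv'}B_{\vv'})$.

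Applying Theorem \ref{SquareFunction} at $q=4$ now yields
\[
E(A,A)=\|f\|_{L^4}^4\leq C_4^{4r}\Bigl\|\Bigl(\sum_\vv |f_\vv|^2\Bigr)^{1/2}\Bigr\|_{L^4}^4=C_4^{4r}\sum_{\vv,\vv'\in V}E(\pp^\vv B_\vv,\pp^{\vv'}B_{\vv'}),
\]
and setting $C=C_4^4$ delivers the stated bound. There is no genuine obstacle here; the only thing to double-check is the disjointness of the decomposition $A=\bigsqcup_\vv \pp^\vv B_\vv$ (so that $f=\sum_\vv f_\vv$), which is immediate from the coprimality hypothesis on each $B_\vv$, together with the cleanness of the Parseval identifications on both sides.
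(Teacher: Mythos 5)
Your proof is correct and follows exactly the route the paper takes: apply Theorem \ref{SquareFunction} with $q=4$ and $\hat f=\one_A$, then identify $\|f\|_{L^4}^4$ with $E(A,A)$ and expand the fourth power of the square function and apply Parseval to recognize the cross terms as $E(\pp^\vv B_\vv,\pp^{\vv'}B_{\vv'})$. The paper states this as an immediate consequence of Theorem \ref{SquareFunction}, and your write-out of the orthogonality/Parseval bookkeeping (including the disjointness of the decomposition from the coprimality of the $B_\vv$) is the intended justification.
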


Corollary \ref{EnergyDecomposition}, when coupled with the Cauchy-Schwarz inequality, recovers Chang's original estimate. However, we will see in the next section, that for many pairs $(\vv_1,\vv_2)$, we have a substantial improvement on the trivial energy estimate.

As mentioned above, in order to prove Theorem \ref{SquareFunction}, we will make use of Burkholder's inequalities for martingale transforms. Specifically, we will use the following result which bounds the norm of multipliers which are constant on $p$-adic scales. In what follows, we write \[f_\eps(t)=\sum_{n\in \ZZ}\eps(n)\hat f(n)e(nt).\] 
\begin{Theorem}\label{Burkholder}
    Let $p$ be a prime and suppose $q$ is such that $1<q<\infty$. Let $\eps:\ZZ\to \{-1,1\}$ be a function such that $\eps(n)$ depends only on $v_p(n)$. Then there is an absolute constant $C_q$, depending only on $q$ such that we have
    \[\left\|f\right\|_{L^{q}}\leq C_q\left\|f_\eps\right\|_{L^{q}}.\]
\end{Theorem}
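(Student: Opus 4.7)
The plan is to realize the sign-twist $f \mapsto f_\eps$ as a martingale transform on $[0,1)$ relative to a natural filtration of $p$-adic scales, and then to invoke Burkholder's martingale transform inequality as a black box. First I set up the filtration. For each $j \geq 0$ let
\[P_j f(t) = \sum_{p^j \mid n} \hat f(n)\, e(nt).\]
A direct computation of Fourier coefficients shows
\[P_j f(t) = \frac{1}{p^j}\sum_{l=0}^{p^j-1} f\!\left(t + \frac{l}{p^j}\right),\]
i.e.\ $P_j$ is the orthogonal projection onto $1/p^j$-periodic functions on $[0,1)$, which is precisely the conditional expectation $E[f \mid \cG_j]$ relative to the sigma-algebra $\cG_j$ of $1/p^j$-periodic Borel sets. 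Because every $1/p^{j+1}$-periodic set is \emph{a fortiori} $1/p^j$-periodic, the sequence $\{\cG_j\}_{j\ge 0}$ is decreasing.

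Next, I reverse the filtration. Choose $N$ so large that every nonzero frequency of the trigonometric polynomial $f$ satisfies $v_p(n) < N$, and set $\cF_j = \cG_{N-j}$ for $0 \le j \le N$, producing an increasing filtration. Then $M_j := E[f \mid \cF_j] = P_{N-j}f$ is a martingale with $M_0 = \hat f(0)$ and $M_N = f$, whose differences
\[dM_j = P_{N-j}f - P_{N-j+1}f = \Delta_{N-j}f, \qquad \Delta_i f := \sum_{v_p(n)=i}\hat f(n)\,e(nt),\]
are the $p$-adic Littlewood--Paley blocks of $f$. Because $\eps(n)$ depends only on $v_p(n)$, it is constant on the support of each $\Delta_i f$; writing $\eps_i$ for this common value, the sequence $(\eps_{N-j})_{j=1}^N$ is a deterministic (hence predictable) $\pm 1$-valued multiplier. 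Burkholder's inequality therefore gives
\[\Bigl\|\hat f(0) + \sum_i \eps_i \Delta_i f\Bigr\|_{L^q} \le C_q\|f\|_{L^q}\]
for every $1 < q < \infty$.

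The only difference between the left-hand side and $f_\eps = \eps(0)\hat f(0) + \sum_i \eps_i \Delta_i f$ is the zero mode, and $|\hat f(0)| \le \|f\|_{L^1} \le \|f\|_{L^q}$, so at the cost of adjusting the constant I obtain $\|f_\eps\|_{L^q} \le C'_q\|f\|_{L^q}$. This is the reverse of the bound I actually want, but the inequality is easily inverted by the involution $(f_\eps)_\eps = f$, which holds because $\eps_i^2 = 1$: applying the bound just proved with $f_\eps$ in place of $f$ yields $\|f\|_{L^q} = \|(f_\eps)_\eps\|_{L^q} \le C'_q\|f_\eps\|_{L^q}$, which is the claim. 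The principal step is the identification of the Fourier projections $P_j$ with conditional expectations for the periodicity sigma-algebras $\cG_j$; once that is in hand Burkholder does all the heavy lifting, with only minor bookkeeping around the zero frequency (which sits as the initial value $M_0$) and the inversion via $\eps^2 = 1$.
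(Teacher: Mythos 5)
Your proposal is correct, and it supplies a derivation that the paper deliberately omits: the authors state that they ``do not give a proof of Burkholder's theorem'' and instead point the reader to Gundy--Varopoulos \cite{GV} and Edwards--Gaudry \cite{EG}, so Theorem \ref{Burkholder} appears in the paper as a citation rather than with a proof. The route you take is the standard one and is essentially the Gundy--Varopoulos argument the paper has in mind: identify $P_j f = \frac{1}{p^j}\sum_{l} f(\cdot + l/p^j)$ as the conditional expectation onto the $\sigma$-algebra $\cG_j$ of $1/p^j$-periodic sets (your verification that the averaging operator is $\cG_j$-measurable and integrates correctly against $\cG_j$-sets is the right check), note that $\cG_j$ decreases in $j$, reverse the filtration using the finiteness of the Fourier support, read off the martingale differences as the $p$-adic Littlewood--Paley blocks $\Delta_i f$, apply Burkholder's martingale transform inequality with the deterministic $\pm 1$ predictable sequence, and finally undo the sign twist via $(f_\eps)_\eps = f$. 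The bookkeeping at the zero frequency -- that $n=0$ sits in $M_0$ since $v_p(0)=\infty$, and that replacing $\hat f(0)$ by $\eps(0)\hat f(0)$ costs at most $2|\hat f(0)|\le 2\|f\|_{L^q}$ -- is handled correctly. So while you are not diverging from the paper's intended argument, you have correctly filled in the step it leaves to the references, and that is exactly what was asked.
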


One should think of choosing $\eps$ to be random, subject to the constraint that it be constant on $p$-adic scales. Then, the multiplier theorem above is seen to be equivalent to the square-function estimate quoted in Theorem \ref{SquareFunction} (in the case $r=1$) by way of Khintchine's inequality. First some notation: for a partition $\cP$ of $\ZZ$, write
\[(S_\cP f)(t)=\lr{\sum_{P\in\cP}\left|\sum_{n\in P}\hat f(n)e(nt)\right|^2}^{1/2}.\]
\begin{Lemma}\label{Khintchine}
    Let $\cP$ be a partition of $\ZZ$. Then the following are equivalent:
    \begin{enumerate}
        \item for any $q> 1$ there are constants $c_q$ and $C_q$ such that for any trigonometric polynomial $f$,
        \[c_q\|f\|_{L^{q}}\leq \left\|S_{\cP}f\right\|_{L^{q}}\leq C_q\|f\|_{L^{q}},\]
        \item for any $q> 1$ there is a constant $C_q$ such that for any trigonometric polynomial $f$ and any function $\eps:\ZZ\to\{-1,1\}$ which is constant on the parts of $\cP$, we have
        \[\left\|f\right\|_{L^{q}}\leq C_q\left\|f_\eps\right\|_{L^{q}}.\]
    \end{enumerate}
\end{Lemma}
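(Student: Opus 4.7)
The plan is to connect the two statements via Khintchine's inequality, leveraging the key observation that when $\eps$ is constant on parts of $\cP$, the block decomposition of $f_\eps$ is obtained from that of $f$ by multiplying each block $f_P(t)=\sum_{n\in P}\hat f(n)e(nt)$ by a sign $\eps_P\in\{-1,1\}$. In particular, the square function is unchanged: $S_{\cP} f_\eps = S_{\cP} f$ pointwise, since only the moduli of the blocks enter.

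For the direction $(1)\Rightarrow(2)$: given an admissible $\eps$, apply the lower bound of (1) to $f$ and the upper bound of (1) to $f_\eps$, and combine with $S_\cP f = S_\cP f_\eps$ to obtain
$$c_q\|f\|_{L^q}\leq \|S_\cP f\|_{L^q}=\|S_\cP f_\eps\|_{L^q}\leq C_q\|f_\eps\|_{L^q},$$
which is exactly (2). For the converse $(2)\Rightarrow(1)$: first observe that since $(f_\eps)_\eps=f$, applying (2) to $f_\eps$ in place of $f$ produces $\|f_\eps\|_{L^q}\leq C_q\|f\|_{L^q}$, so (2) is in fact two-sided: $\|f_\eps\|_{L^q}\asymp_q \|f\|_{L^q}$ for every $\eps$ constant on the parts. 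Now let $(\eps_P)_{P\in\cP}$ be independent Rademacher $\pm1$ variables and set $\eps(n)=\eps_P$ for $n\in P$, so $f_\eps(t)=\sum_P \eps_P f_P(t)$. Khintchine's inequality applied pointwise in $t$ to the complex scalars $\{f_P(t)\}_P$ yields constants $A_q,B_q>0$ with
$$A_q\lr{\sum_P|f_P(t)|^2}^{q/2}\leq \EE|f_\eps(t)|^q \leq B_q\lr{\sum_P|f_P(t)|^2}^{q/2}.$$
Integrating over $t\in\TT$ and invoking Fubini gives $A_q\|S_\cP f\|_{L^q}^q \leq \EE\|f_\eps\|_{L^q}^q \leq B_q\|S_\cP f\|_{L^q}^q$. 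Since the two-sided norm equivalence forces $\EE\|f_\eps\|_{L^q}^q\asymp_q \|f\|_{L^q}^q$, one concludes $\|S_\cP f\|_{L^q}\asymp_q \|f\|_{L^q}$, which is (1).

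There is no real obstacle here; the argument is essentially a textbook randomization trick. The only mild subtlety is noting at the outset of $(2)\Rightarrow(1)$ that (2) automatically self-dualizes into a two-sided equivalence of $\|f\|_{L^q}$ and $\|f_\eps\|_{L^q}$, without which one could not freely pass under the expectation on the right-hand side of Khintchine's inequality.
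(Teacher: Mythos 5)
Your proof is correct, and the forward direction $(1)\Rightarrow(2)$ matches the paper's argument verbatim. For the converse $(2)\Rightarrow(1)$, both you and the paper begin with the same self-duality observation (that $(f_\eps)_\eps=f$ upgrades $(2)$ to the two-sided equivalence $\|f_\eps\|_{L^q}\asymp_q\|f\|_{L^q}$) and then randomize over $\eps$. The difference is in how the \emph{lower} bound of $(1)$ is obtained: you invoke both sides of Khintchine's inequality pointwise, so that $\EE\|f_\eps\|_{L^q}^q\asymp\|S_\cP f\|_{L^q}^q$ holds with constants independent of $f$, and then sandwich against the already-established $\EE\|f_\eps\|_{L^q}^q\asymp\|f\|_{L^q}^q$. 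The paper instead uses only the lower Khintchine inequality to derive the upper bound $\|S_\cP f\|_{L^q}\ll\|f\|_{L^q}$, and then proves the lower bound $\|S_\cP f\|_{L^q}\gg\|f\|_{L^q}$ by a separate duality argument: pairing against $g$ with $\|g\|_{L^{q'}}=1$, applying Cauchy--Schwarz pointwise to get $|\langle f,g\rangle|\leq\langle S_\cP f,S_\cP g\rangle$, and then H\"older plus the upper bound applied to $g$. Your approach is marginally shorter and more symmetric; the paper's duality reduction is the one it explicitly flags for reuse in the subsequent iteration lemma (Lemma \ref{Iteration}), where only the upper bound on $S_\cP$ is proved directly and the lower bound again follows by duality, so structuring the proof that way sets up the later argument.
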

\begin{proof}
    Let \[f_\eps(t)=\sum_n\eps(n)\hat f(n)e(nt).\] Then assuming clause (1) of the lemma, \[
     \|f\|_{L^{q}}\leq \frac{1}{c_q}\left\|S_\cP f\right\|_{L^{q}}=\frac{1}{c_q}\left\|S_\cP f_\eps\right\|_{L^{q}}\leq \frac{C_q}{c_q}\|f_\eps\|_{L^{q}}. \]
    Conversely, if we assume clause (2) of the lemma, then we can write $f=(f_{\eps})_\eps$ so the reverse inequality \[\|f_\eps\|_{L^q}^q\leq C_q^q \|f\|_{L^q}^q\] holds, and taking expectation over all choices of $\eps$, \[C_q^q\|f\|_{L^q}^q\geq \EE_\eps \|f_\eps\|_{L^{q}}^{q}= \int_0^1 \EE_\eps \left|\sum_{P\in \cP}\eps(P)\sum_{n\in P}\hat f(n)e(nt)\right|^q dt.\] We get from Khintchine's inequality (see, for instance, Lemma 5.5 of \cite{MS}) that
    \[\EE_\eps \left|\sum_{P\in \cP}\eps(P)\sum_{n\in P}\hat f(n)e(nt)\right|^q \geq c_q'\lr{\sum_{P\in\cP}\left|\sum_{n\in P}\hat f(n)e(nt)\right|^2}^{q/2}=c_q'(S_\cP f)^q,\]
    which proves the second inequality from (1) upon integration over $t\in[0,1]$.

    To prove the first inequality in clause (1), we appeal to duality. Let $q'$ be the dual exponent to $q$ and 
    take a trigonometric polynomial $g$ such that 
    $\|g\|_{L^{q'}}=1$. By orthogonality and the triangle inequality,
    \[|\langle f,g\rangle| \leq \int_0^1 \sum_{P\in\cP}\left|\sum_{n\in P}\hat f(n)e(nt)\right|\left|\sum_{n\in P}\hat g(n)e(nt)\right|dt.\]
    By the Cauchy-Schwarz inequality, the right hand side is at most
    \[\left\langle S_\cP f,S_\cP g\right\rangle\leq \|S_\cP f\|_{L^q}\|S_\cP g\|_{L^{q'}}\leq C_{q'}\left\|S_\cP f\right\|_{L^q}\]
    where in the last estimate we have applied H\"older's inequality and the second inequality from clause (1) to $g$.
\end{proof}

Here we remark that, using the last part of the above proof, it will generally suffice to prove the second inequality from (1), whence the first can be derived from duality.

The reason for introducing the multiplier formulation is that it is well-suited to iteration, allowing us to prove the following.

\begin{Lemma}\label{Iteration}
    Let $q\geq 1$ and suppose $\cP_1$ and $\cP_2$ are two partitions of $\ZZ$ such that for any trigonometric polynomial $f$,
    \[\left\|S_{\cP_j}f\right\|_{L^{q}}\leq C_{q}(\cP_j)\left\|f\right\|_{L^{q}} \,, \quad \quad j=1,2\,.\] Then if $\cP=\{P_1\cap P_2:P_1\in\cP_1,P_2\in \cP_2\}$, there is a constant $C_q(\cP)$ such that \[\left\|S_{\cP}f\right\|_{L^{q}}\leq C_{q}(\cP)\left\|f\right\|_{L^{q}}\]
\end{Lemma}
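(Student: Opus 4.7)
The idea is to use a pointwise Khintchine randomization to rewrite $(S_\cP f)^2$ as an $L^2$-average of $(S_{\cP_1}f_\xi)^2$ over $\cP_2$-constant signs $\xi$, and then to apply the two hypotheses successively. Let $\xi:\ZZ\to\{\pm 1\}$ be uniform on sign functions constant on parts of $\cP_2$. Since Rademacher sums have exact second moment, $\EE_\xi|g_\xi(t)|^2=(S_{\cP_2}g(t))^2$ for every trigonometric polynomial $g$. Taking $g=f^{(P_1)}$, using the commutation $(f^{(P_1)})_\xi=(f_\xi)^{(P_1)}$, and summing over $P_1$, we obtain the pointwise identity
\[\bigl(S_\cP f(t)\bigr)^2=\sum_{P_1}\bigl(S_{\cP_2}f^{(P_1)}(t)\bigr)^2=\EE_\xi\bigl(S_{\cP_1}f_\xi(t)\bigr)^2.\]

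For $q\geq 2$ the function $x\mapsto x^{q/2}$ is convex, so Jensen's inequality applied pointwise gives $(S_\cP f(t))^q\leq\EE_\xi(S_{\cP_1}f_\xi(t))^q$. Integrating in $t$ and invoking the $\cP_1$-hypothesis on $f_\xi$ (for each fixed $\xi$),
\[\|S_\cP f\|_{L^q}^q\leq\EE_\xi\|S_{\cP_1}f_\xi\|_{L^q}^q\leq C_q(\cP_1)^q\,\EE_\xi\|f_\xi\|_{L^q}^q.\]
A second pointwise Khintchine bound $\EE_\xi|f_\xi(t)|^q\leq K_q(S_{\cP_2}f(t))^q$, with $K_q$ an absolute constant, combined with the $\cP_2$-hypothesis, controls the right-hand side by $K_qC_q(\cP_2)^q\|f\|_{L^q}^q$. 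Chaining yields $\|S_\cP f\|_{L^q}\leq K_q^{1/q}C_q(\cP_1)C_q(\cP_2)\|f\|_{L^q}$, so the lemma holds for $q\geq 2$ with $C_q(\cP)\ll C_q(\cP_1)C_q(\cP_2)$.

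For $1<q<2$ the convexity in the previous step reverses and Jensen's inequality no longer helps. The cleanest fix is to reformulate the argument as a vector-valued Littlewood--Paley estimate. Set $H=\ell^2(\cP_1)$ with orthonormal basis $\{e_{P_1}\}_{P_1\in\cP_1}$ and consider the $H$-valued trigonometric polynomial $F(t)=\sum_{P_1}f^{(P_1)}(t)\,e_{P_1}$. A direct computation gives $\|F\|_{L^q(H)}=\|S_{\cP_1}f\|_{L^q}$ and $\bigl(\sum_{P_2}\|F^{(P_2)}\|_H^2\bigr)^{1/2}=S_\cP f$. The scalar $\cP_2$-square-function bound extends to Hilbert-valued trigonometric polynomials with the same constant up to a universal factor---via Kahane's inequality for Rademacher sums in Hilbert spaces, equivalently via Burkholder's theorem in the UMD setting. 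Applying this extension to $F$ yields $\|S_\cP f\|_{L^q}\leq C_q(\cP_2)\|S_{\cP_1}f\|_{L^q}\leq C_q(\cP_1)C_q(\cP_2)\|f\|_{L^q}$, now uniformly in $q>1$.

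The main obstacle is precisely the $q<2$ case: Jensen is unavailable in the scalar argument, so one must either pass to $H$-valued trigonometric polynomials (invoking a vector-valued Littlewood--Paley inequality) or run the duality argument at the end of the proof of Lemma~\ref{Khintchine} to inherit the bound from $q'=q/(q-1)\geq 2$. Either route is transparent in the paper's setting, where the partitions $\cP_j$ arise from Theorem~\ref{Burkholder} and the hypotheses hold uniformly in $q>1$, with the vector-valued extension being essentially automatic. Iterating $C_q(\cP_1\wedge\cP_2)\ll C_q(\cP_1)C_q(\cP_2)$ across $r$ partitions then produces the $C_q^r$ dependence appearing in Theorem~\ref{SquareFunction}.
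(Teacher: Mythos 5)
Your $q\geq 2$ argument is essentially the paper's: randomize over $\cP_2$-constant signs, observe that $\EE_\xi\bigl(S_{\cP_1}f_\xi\bigr)^2 = \bigl(S_\cP f\bigr)^2$ pointwise, apply Jensen, and then invoke the $\cP_1$-hypothesis. The only cosmetic difference is that you close the loop with a pointwise upper Khintchine bound $\EE_\xi|f_\xi|^q\leq K_q(S_{\cP_2}f)^q$ followed by the $\cP_2$-hypothesis, whereas the paper routes through the multiplier formulation (Lemma~\ref{Khintchine} applied to $\cP_2$) to get $\|f_\eps\|_{L^q}\leq C\|f\|_{L^q}$ before applying the $\cP_1$-hypothesis. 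These are equivalent; your version is slightly more self-contained.

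For $1<q<2$ you diverge from the paper. The paper's route is a short duality argument: for $\eps$ constant on parts of $\cP$ one pairs $f_\eps$ against a unit-norm test function $g$ in $L^{q'}$, moves $\eps$ onto $g$ via Parseval, and uses the already-established $q'\geq 2$ multiplier bound on $g_\eps$, then invokes Lemma~\ref{Khintchine} to pass from the $\cP$-multiplier bound to the $\cP$-square-function bound. Your primary route instead promotes the scalar $\cP_2$-estimate to an $\ell^2(\cP_1)$-valued one and reads off the result from the identities $\|F\|_{L^q(H)}=\|S_{\cP_1}f\|_{L^q}$ and $\bigl(\sum_{P_2}\|F^{(P_2)}\|_H^2\bigr)^{1/2}=S_\cP f$. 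This is correct, but the ``with the same constant up to a universal factor'' step for the Hilbert-valued extension deserves a word of justification --- it is not purely formal, since the square function is nonlinear; the clean way to see it is to pass to the linear multiplier formulation (Lemma~\ref{Khintchine}), extend the multiplier to $L^q(\ell^2)$ by the Marcinkiewicz--Zygmund theorem, and recover the Hilbert-valued square-function bound by Kahane's inequality. Once that is in place, your argument is uniform in $q>1$ and avoids the two-pass structure of the paper's proof. You also correctly flag the paper's duality route as the more elementary alternative, so nothing is missing; the vector-valued route just carries a bit more machinery.
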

\begin{proof}
    First assume $q\geq 2$. Let $\eps:\ZZ\to \{-1,1\}$ be a function which constant on the parts of $\cP_2$, and suppose $f$ is a trigonometric polynomial. By hypothesis and Lemma \ref{Khintchine}, there are is a positive constant $C$ such that
    \[\|f_\eps\|_{L^q}\leq C\|f\|_{L^q},\]
    whence \[\|S_{\cP_1}f_\eps\|_{L^q}\leq C_{q}(\cP_1)\|f_\eps\|_{L^q}\leq C_{q}(\cP_1)C\|f\|_{L^q}.\]
    Now
    \[(S_{\cP_1}f_\eps(t))^2=\sum_{P_1\in \cP_1}\sum_{n,m\in P_1}\eps(n)\eps(m)\hat f(n)\bar{\hat f(m)}e((n-m)t),\]
    and taking expectation over $\eps$ yields
    \[\EE_\eps((S_{\cP_1}f_\eps(t))^2)=\sum_{P_1\in \cP_1}\sum_{P_2,P_2'\in \cP_2}\sum_{\substack{n,m\in P_1\\ n\in P_2,m\in P_2'}}\EE_\eps(\eps(n)\eps(m))\hat f(n)\bar{\hat f(m)}e((n-m)t).\]
    The expectation vanishes unless $P_2=P_2'$, in which case it is 1, and hence 
    \[\EE_\eps((S_{\cP_1}f_\eps(t))^2)=\sum_{P_1\in\cP_1}\sum_{P_2\in \cP_2}\sum_{n,m\in P_1\cap P_2}\hat f(n)\bar{\hat f(m)}e((n-m)t)=(S_{\cP}f(t))^2.\]
    Raising to the power $q/2$, we find
    \[(S_{\cP} f(t))^q=(\EE_\eps((S_{\cP_1}f_\eps(t))^2)^{q/2}\leq \EE_{\eps}((S_{\cP_1}f_\eps(t))^q)\]
    by Jensen's inequality, and integrating over $t$ shows
    \[\|S_{\cP}f\|_{L^q}^q\leq (C_{\cP_1}C)^q\|f\|_{L^q}^q,\]
    as required.

    To get the claim for $1\leq q<2$, we use duality and the random multiplier formulation. Indeed, let $q'\geq 2$ be the exponent conjugate to $q$, let $\eps:\ZZ\to\{-1,1\}$ be a function which is constant on the parts of $\cP$, and let $g$ be a trigonometric polynomial with $\|g\|_{L^{q'}}=1$. Then by Parseval and H\"older's inequality,
    \[|\ang{f_\eps,g}|=|\ang{f,g_\eps}|\leq \|f\|_{L^q}\|g_\eps\|_{L^{q'}}\leq C_{q'}(\cP)\|f\|_{L^q}\]
    which shows $\|f_\eps\|_{L^q}\leq C_{q}\|f\|_{L^q}$ for any $\eps:\ZZ\to\{-1,1\}$ which is constant on the parts of $\cP$, and hence the boundedness of $S_\cP$ follows from Lemma \ref{Khintchine}.    
\end{proof}

\begin{proof}[Proof of Theorem \ref{SquareFunction}]
    To each prime $p_i$ with $1\leq i\leq r$ we associate the partition $\cP_i$ of $\ZZ$ into $p_i$-adic scales. From Theorem \ref{Burkholder} and Lemma \ref{Khintchine}, we see that for each prime $p_i$ with $1\leq i\leq r$, we find a constant $C_q$ such that $\|S_{\cP_i}f\|_{L^q}\leq C_q\|f\|_{L^q}$. From duality, it suffices to show that the common refinement of the partitions $\cP_i$ yields a bounded square function. This in turn follows from Lemma \ref{Iteration} applied $r-1$ times.
\end{proof}

\section{Energy estimates with Dilates}

In this section we prove estimates for the number of differences which lie in fixed (coset of a) multiplicative group of bounded rank. When the rank is one, this can be achieved in an elementary fashion as described by the following theorem. This theorem will not be needed, unless $k=2$ (although this special case, as has been discussed at the outset is already quite nontrivial) but we include it as it may be of independent 
interest 
to prove our results without an appeal to much more  sophisticated results.

\begin{Theorem}\label{RankOne}
Let $B$ be a finite set of positive integers with $|B|\geq 2$, let $p$ be a prime, and let $n$ be a non-zero integer. Define
\[X_p(B,n)=\{(b_1,b_2)\in B\times B: b_1-b_2=np^v\text{ for some }v\in \ZZ_{\geq 0}\}.\]
Then $|X_p(B,n)|\leq 1+4|B|\log_2|B|$.
\end{Theorem}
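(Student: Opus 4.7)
The plan is to bound $|X_p(B,n)|$ by combining a $p$-adic tree decomposition of $B$ with a heavy-light path argument.

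I would first reduce to $n > 0$ with $\gcd(n,p) = 1$: swapping $b_1 \leftrightarrow b_2$ gives a bijection $X_p(B,n) \leftrightarrow X_p(B,-n)$, so we may assume $n > 0$; and writing $n = p^s n'$ with $\gcd(n',p) = 1$, any difference $np^v$ equals $n' p^{v+s}$, hence $X_p(B,n) \subseteq X_p(B,n')$, so we may assume $\gcd(n,p) = 1$.

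Next, build the rooted tree $T$ whose nodes at level $v \geq 0$ are the non-empty intersections $N = B \cap (r + p^v\ZZ)$. Each node has at most $p$ children at level $v+1$; call a node \emph{branching} if it has at least two. For a pair $(b_1,b_2) \in X_p(B,n)$ with $b_1 - b_2 = np^v$, the coprimality $\gcd(n,p) = 1$ forces $v_p(b_1 - b_2) = v$, so $b_1$ and $b_2$ lie in a common level-$v$ branching node $N$ but are distributed across distinct level-$(v+1)$ children. Writing $a(N) = |\{(b_1,b_2) \in N \times N : b_1 - b_2 = np^v\}|$ for a level-$v$ node $N$, this yields
\[
|X_p(B,n)| = \sum_{N \text{ branching}} a(N).
\]

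The crucial technical step would be the node-level estimate $a(N) \leq 2(|N| - M_N)$, where $M_N$ denotes the maximum size among children of $N$. Indexing the children by their coefficient of $p^v$ modulo $p$, the translation $b \mapsto b + np^v$ cycles these indices by $k \mapsto k + n \bmod p$, which is a fixed-point-free permutation because $\gcd(n,p) = 1$. Each pair contributing to $a(N)$ arises from an element of some child $C_i$ shifted to a child $C_{j(i)}$ of matching shifted residue (when such a child exists), contributing at most $\min(|C_i|, |C_{j(i)}|)$. Designating the heavy child $C_{i^*}$, fixed-point-freeness gives $j(i^*) \neq i^*$, so $|C_{j(i^*)}| \leq |N| - M_N$; for any $i \neq i^*$, $|C_i| \leq |N| - M_N$ holds trivially. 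Summing over $i$ yields the claim.

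Finally, apply the heavy-light decomposition: at each branching node $N$, any non-heavy child has size at most $\min(M_N, |N| - M_N) \leq |N|/2$. Therefore, along the root-to-leaf path of a fixed $b \in B$, the node containing $b$ at least halves in size each time $b$ enters a non-heavy child, so $b$ lies in a non-heavy child of a branching ancestor on at most $\log_2 |B|$ occasions. Exchanging the order of summation,
\[
\sum_{N \text{ branching}} (|N| - M_N) \;=\; \sum_{b \in B} \#\{N : b \text{ is in a non-heavy child of } N\} \;\leq\; |B| \log_2 |B|,
\]
which gives $|X_p(B,n)| \leq 2|B|\log_2|B| \leq 1 + 4|B|\log_2|B|$, as required.

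The main obstacle is establishing the node-level bound $a(N) \leq 2(|N| - M_N)$: the fixed-point-free property of the residue shift is essential, and one must carefully handle the case when the shifted residue of a child is not represented among the children of $N$, in which case the corresponding contribution vanishes and only improves the estimate.
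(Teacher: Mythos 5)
Your argument is correct in substance and takes a genuinely different route from the paper's. The paper proves the bound by induction on $|B|$: it partitions $B$ into residue classes modulo $p$, bounds the $v=0$ contribution by $|B|\min\{1,2-2\mu(u_{\max})\}$ where $\mu$ is the induced probability measure, and closes the induction with the entropy inequality $H(\mu)\geq \frac14\min\{1,2-2\mu(u_{\max})\}$. You instead unroll that recursion into the $p$-adic tree on $B$, prove a per-node bound $a(N)\leq 2(|N|-M_N)$ using the fixed-point-free shift $k\mapsto k+n \bmod p$, and then use the heavy-light observation that each $b\in B$ can be in a non-heavy child of a branching ancestor at most $\log_2|B|$ times. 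This is a nice alternative: it avoids the entropy lemma, is arguably more transparent, and in fact yields the sharper constant $|X_p(B,n)|\leq 2|B|\log_2|B|$ rather than $1+4|B|\log_2|B|$. Morally the two arguments track the same quantity (how unbalanced the residue partition is at each scale), but yours replaces the information-theoretic step with the classical heavy-path trick.

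One piece of the write-up is misleading and should be tightened: in the node-level estimate you write ``for any $i\neq i^*$, $|C_i|\leq |N|-M_N$ holds trivially. Summing over $i$ yields the claim.'' As phrased, summing the bound $|N|-M_N$ over all children gives $(\#\text{children})\cdot(|N|-M_N)$, which can far exceed $2(|N|-M_N)$. What you actually want is
\[
a(N)\;\leq\;\sum_{i\neq i^*}\min\bigl(|C_i|,|C_{j(i)}|\bigr)+\min\bigl(|C_{i^*}|,|C_{j(i^*)}|\bigr)
\;\leq\;\sum_{i\neq i^*}|C_i|+|C_{j(i^*)}|
\;=\;(|N|-M_N)+|C_{j(i^*)}|\;\leq\;2(|N|-M_N),
\]
using the \emph{equality} $\sum_{i\neq i^*}|C_i|=|N|-M_N$ for the bulk of the sum and fixed-point-freeness only for the heavy index $i^*$. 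The individual bounds $|C_i|\leq |N|-M_N$ for $i\neq i^*$ are true but are not the step you need and should be deleted. With that fix the argument is complete.
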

\begin{proof}
Let $p$ be a fixed prime. It will be convenient to normalize $B$ as follows. First, if $p$ divides $n$ then we write $n=p^{v}n'$ with $\gcd(n',p)=1$. Then $X_p(B,n)\subseteq X_p(B,n')$ and so there is no loss of generality in assuming $\gcd(n,p)=1$. Next, if $B$ lies in a single congruence class modulo $p^{r_0}$ for some $r_0>0$, then we may replace $B$ with $B-\min B$ (or any other element of $B$), without affecting $|X_p(B)|$. The result would be that $B-\min B$ consists of multiples of $p^{r_0}$, and since $p$ does not divide $n$, we can then bound $|X_p(B,n)|$ by $|X_p(p^{-r_0}B,n)|$. So we may further assuming the elements of $B$ are not all congruent modulo $p$.

We proceed by induction on $|B|$. When $|B|=2$, suppose $B=\{b,b'\}$ is a set and $p$ is a prime. Then $b-b'=p^rn$ for at most one value of $r$, so $|X_p(B)|\leq 1$ and this establishes the base case. For larger $B$, we condition on the value of $b\mod p$. To do that, we write \[B_u=\{b\in B:b\equiv u\mod p\}\] and let $\mu(u)=\frac{|B_u|}{|B|}$ be the accompanying probability measure. Given $b\equiv u\mod p$, we either have $b-b'=n$ in which case $b'\equiv u-n\mod p$, and such solutions contribute at most
\begin{align*}
    \sum_{u\mod p}\min\{|B_u|,|B_{u-n}|\}&= |B|\sum_{u\mod p}\min\{\mu(u),\mu(n-u)\}\\
    &\leq |B|\sum_{u\mod p}\min\{\mu(u),1-\mu(u)\}\\
    &=|B|\min\{1,2-2\mu(u_{\max})\}
\end{align*} solutions, where $u_{\max}$ is the residue class for which $\mu(u)$ is largest. Otherwise $b-b'=p^rn$ for some $r>0$ in which case $b$ and $b'$ agree modulo $p$. Thus
\[|X_p(B,n)|\leq \sum_{u\mod p}|X_p(B_u,n)|+|B|\min\{1,2-2\mu(u_{\max})\}.\]
Since $B_u\neq B$ by our normalization, we apply induction and find \[|X_p(B_u,n)|\leq |B_u|(1+4\log_2|B_u|)=\mu(u)|B|(1+4\log_2|B|)-4\mu(u)|B|\log(1/\mu(u)).\] Putting this all together,
\[|X_p(B,n)|\leq |B|(1+4\log_2|B|)-4|B|(H(\mu)-\frac{1}{4}\min\{1,2-2\mu(u_{\max})\}).\]
Here \[H(\mu)=\sum_{u\mod p}\mu(u)\log_2\frac{1}{\mu(u)}\]
is the entropy of the measure $\mu$. We claim \[H(\mu)-\frac{1}{4}\min\{1,2-2\mu(u_{\max})\})\geq0.\]
Indeed, if $\mu(u_{\max})\leq 1/2^{1/4}$ then $H(\mu)\geq 1/4$, otherwise from the inequality \[\frac{1-x}{2}\leq \log_2 \frac{1}{x},\] we have
\[\frac{1-\mu(u_{\max})}{2}\leq \log_2\frac{1}{\mu(u_{\max})}= \sum_{u\mod p}\mu(u)\log_2\lr{\frac{1}{\mu(u_{\max})}}\leq \sum_{u\mod p}\mu(u)\log_2\lr{\frac{1}{\mu(u)}}.\]
\end{proof}

It may be that the above argument extends to the case of higher rank  $r>1$. However, we can just overwhelm the problem with some heavy machinery from the theory of $S$-unit equations. The following argument uses a quantitative estimate concerning linear equations in a multiplicative group, taken from \cite{AV}, improving the work of  \cite{ESS}. 
\begin{Theorem}[$S$-unit bound] Let $S=\{p_1,\ldots,p_r\}$ be a set of rational primes and let $\Gamma=\ang{S}$ be the multiplicative group they generate.
For fixed $a_1,\ldots,a_l\in \mathbb C^\times$, 
\[
\left|\left\{\gamma_1,\ldots,\gamma_l\in \Gamma: \sideset{}{^*}\sum_{1\leq i\leq l} a_i\gamma_i=1\right\}\right| \leq (8l)^{4l^2+lr+1},
\]
where the notation $\sum^*$ indicates non-degeneracy in the sense that  $\sum_{i\in I}a_i\gamma_i\neq 0$ for non-empty proper subsets $I\subset\{1,\ldots,l\}$.
\label{th:subspace}
\end{Theorem}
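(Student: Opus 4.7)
The plan is to prove this via the quantitative Subspace Theorem of Schmidt, in the absolute form developed by Evertse and Schlickewei and refined by Amoroso and Viada. This is a deep result in Diophantine approximation that bounds the number of algebraic integer solutions to $|L_1(\xx)\cdots L_n(\xx)|<H(\xx)^{-\eps}$, where $L_i$ are linear forms over a number field and $H$ is the absolute multiplicative height. The innovation of \cite{AV} over \cite{ESS} lies in a careful partition argument together with an absolute-height version of the theorem that trims the dependence on the rank $r$ of $\Gamma$ down to polynomial, rather than exponential.

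First I would cast $\sum_i a_i\gamma_i=1$ as a projective statement. To each solution $(\gamma_1,\ldots,\gamma_l)$ I associate the point $(a_1\gamma_1:\ldots:a_l\gamma_l:1)\in\PP^l(\bar\QQ)$. The constraint $\gamma_i\in\Gamma$ forces the coordinates to be $S$-units, so by the product formula their absolute heights are controlled simultaneously at the archimedean place and at the primes of $S$. Applying the Subspace Theorem to a suitable choice of linear forms built out of the $a_i$ and the coordinate functions yields a finite collection of proper linear subspaces $W\subset\bar\QQ^{l+1}$ such that every solution of bounded height has its associated projective point in some $W$.

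Next I would argue by induction on $l$. A solution sitting inside such a subspace $W$ provides a non-trivial linear relation among the $a_i\gamma_i$; the non-degeneracy hypothesis $\sum_{i\in I}a_i\gamma_i\neq 0$ for proper $I\subset\{1,\ldots,l\}$ guarantees that such a relation can be used to eliminate one variable and reduce to an $S$-unit equation in $l-1$ terms (possibly after partitioning according to which subsums may vanish, and re-invoking non-degeneracy on each piece). The inductive count, multiplied by the explicit bound on the number of subspaces produced by the quantitative Subspace Theorem, handles this case.

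The remaining ``large-height'' solutions are controlled by a gap principle: in an appropriately normalized $S$-adic valuation, two distinct solutions must differ by at least a fixed multiplicative factor, so only boundedly many can accumulate in each archimedean window. The main obstacle, to my mind, is not the overall strategy but the extraction of the precise exponent $4l^2+lr+1$: a routine application of Evertse--Schlickewei--Schmidt yields a bound with at best exponential dependence in $r$, and the Amoroso--Viada improvement requires working with the absolute Subspace Theorem over $\bar\QQ$ and a delicate combinatorial partition of the space of $S$-adic valuations, designed so that each inductive step pays only a polynomial factor in $r$ rather than a factor of order $e^r$.
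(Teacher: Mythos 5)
The paper does not give a proof of Theorem~\ref{th:subspace}; it is imported as a black box from Amoroso--Viada \cite{AV}, which quantitatively sharpens Evertse--Schlickewei--Schmidt \cite{ESS}. So there is no internal proof to compare your attempt against, and for the purposes of this paper the authors' choice to cite the result is exactly the right one.

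As for your sketch on its own terms: it is a fair high-level narrative of how the $S$-unit-equation literature proceeds --- projectivize the relation, apply the (absolute, quantitative) Subspace Theorem so that all solutions of bounded height fall into finitely many proper linear subspaces, exploit non-degeneracy to step down to an $l-1$ term equation inside each subspace, and handle the extreme-height range by a gap principle. But as you yourself flag, it stops precisely where all the work is. The bound in the statement is $(8l)^{4l^2+lr+1}$, and nothing in your outline produces the base $8l$, the quadratic term $4l^2$, or, most delicately, the \emph{linear} dependence $lr$ in the rank. Getting a polynomial (rather than exponential) dependence on $r$ is the entire point of passing from \cite{ESS} to \cite{AV}: it requires the absolute parametric Subspace Theorem over $\bar\QQ$ together with a careful partition of the $S$-adic places so that each inductive step contributes only a polynomial factor in $r$ to the subspace count. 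Without carrying out that bookkeeping --- which is a substantial Diophantine-approximation argument, not a combinatorial reduction --- the sketch is a plausible outline but not a proof, and it would not be reproducible from what you have written. If you intend to actually supply a proof rather than a citation, the subspace count, the height thresholds, and the partition of places all need to be made explicit and tracked through the induction to recover the stated exponent.
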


We need this estimate for the following application taken from \cite{RNZ}, see Lemma 2.1 therein. We include the proof so as to be quantitatively explicit. We quote the result for rational numbers, although it applies much more broadly.
 
\begin{Lemma}[Lemma 2.1 of \cite{RNZ}]\label{RNZ}
    Suppose $0<\eps<\frac{1}{6}$. For any sufficiently large set $B$ of rational numbers and a multiplicative group $\Gamma\subseteq\CC^\times$ generated by $r$ rational primes such that $r\leq (\log |B|)^{1-6\eps}$, one has the estimate
    \[|\{(b_1,b_2)\in B\times B:\,b_1-b_2\in \Gamma\}|\leq |B|\exp((\log|B|)^{1-\eps}).\]
    \label{l:subspace_app}
\end{Lemma}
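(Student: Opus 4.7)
The plan is to leverage the $S$-unit bound (Theorem \ref{th:subspace}) to control additive relations among elements of $\Gamma$. The key algebraic observation is that whenever $\gamma_1,\gamma_2,\gamma_1+\gamma_2$ all lie in $\Gamma$, dividing by $\gamma_1+\gamma_2$ produces a solution $(\delta_1,\delta_2)\in\Gamma^2$ to the $S$-unit equation $\delta_1+\delta_2=1$. By Theorem \ref{th:subspace} with $l=2$ there are at most $16^{17+2r}=\exp(O(r))$ such solutions. Under the hypothesis $r\leq(\log|B|)^{1-6\eps}$ this is much smaller than the target $\exp((\log|B|)^{1-\eps})$, so there is ample room to absorb polynomial or even logarithmic losses from combinatorial manipulations.

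Let $N=|\{(b_1,b_2)\in B\times B:b_1-b_2\in\Gamma\}|$ and let $G$ be the graph on $B$ with edge set $\{(b_1,b_2):b_1-b_2\in\Gamma\cup(-\Gamma)\}$. I would count closed walks of length $4$ in $G$: quadruples $(b_0,b_1,b_2,b_3)\in B^4$ with each consecutive difference in $\Gamma\cup(-\Gamma)$. Each such walk yields a relation $\sum_{i=1}^{4}\epsilon_i\gamma_i=0$ with $\gamma_i\in\Gamma$ and $\epsilon_i\in\{\pm 1\}$, which after normalization becomes an $S$-unit equation in three variables; Theorem \ref{th:subspace} with $l=3$ then gives at most $24^{37+3r}=\exp(O(r))$ ``equation types''. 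Given a type and any edge of $G$, the remaining three vertices of the walk are determined, so the number of such 4-walks is at most $N\cdot\exp(O(r))$. On the other hand, the trace inequality $\tr(A^4)\geq(\tr(A^2))^2/|B|=(2N)^2/|B|$ for the adjacency matrix $A$ of $G$ provides a lower bound. Combining would yield $N^2/|B|\ll N\exp(O(r))$, whence $N\ll|B|\exp(O(r))\ll|B|\exp((\log|B|)^{1-\eps})$, as required.

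The main obstacle is that $\tr(A^4)$ counts \emph{all} closed 4-walks, including degenerate ones such as $b\to b'\to b\to b'\to b$, whose contribution is already as large as $\sum_v d(v)^2\geq 4N^2/|B|$, and which do not produce non-degenerate $S$-unit solutions. To circumvent this one should either restrict to genuine 4-cycles with four distinct vertices, using a dyadic pigeonhole on the fibre sizes $r(\gamma)=|B\cap(B+\gamma)|$ to pass to a regular enough subgraph, or replace 4-walks with a slightly longer configuration whose non-degenerate part dominates. In either case, the non-degeneracy conditions in Theorem \ref{th:subspace} (no vanishing partial sums, equivalently the vertices in each configuration being distinct) require careful bookkeeping, but the $\exp(O(r))$ slack relative to the claimed bound $\exp((\log|B|)^{1-\eps})$ is generous enough to accommodate these losses.
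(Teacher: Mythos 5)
Your setup is sound: you correctly identify the $S$-unit bound (Theorem \ref{th:subspace}) as the engine, correctly translate graph configurations into non-degenerate $S$-unit solutions, and correctly flag that degenerate walks are the obstacle. The problem is that you underestimate the severity of that obstacle and misjudge the fix, and your proposed fixes (dyadic regularization, or ``slightly longer configurations'') will not work if ``slightly'' means bounded: the argument fundamentally requires configuration length that grows with $|B|$.

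Concretely, the $4$-cycle approach cannot close. Your trace lower bound $\tr(A^4)\geq(\tr(A^2))^2/|B|$ is exactly the same order as the Cauchy--Schwarz lower bound for the degenerate-walk count $\sum_v d(v)^2\geq(\tr(A^2))^2/|B|$, so subtracting one from the other gives nothing. Worse, this is not a bookkeeping artefact: there genuinely exist graphs on $n$ vertices with $\Theta(n^{3/2})$ edges and no $4$-cycles at all (incidence graphs of projective planes, for instance). Since the target bound is $N\leq|B|\exp((\log|B|)^{1-\eps})=|B|^{1+o(1)}$, a $4$-cycle-free graph with $|B|^{3/2}$ edges is entirely consistent with zero non-degenerate $4$-cycles while having far more edges than the claimed bound. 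So no amount of regularization can rescue fixed $l=3$; the best one could conceivably extract from $4$-cycles is $N\ll|B|^{3/2}$, which is much too weak. The ``$\exp(O(r))$ slack'' is irrelevant because the loss in the lower bound on non-degenerate $4$-cycles is polynomial in $|B|$, not sub-exponential.

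The paper's proof takes your ``slightly longer configuration'' idea seriously and makes $l$ grow with $|B|$. After pruning to a subgraph of minimum degree $\geq f/2$ (where $f\approx N/|B|$), it counts non-degenerate \emph{paths} of length $l$ from a fixed vertex $b_0$; inductively there are at least $(f/4)^l$ of them, since at each step only $2^l-1$ extensions create a vanishing sub-sum, and $f\geq 2^{l+2}$ keeps the remaining extensions abundant. Pigeonholing over endpoints yields $\geq(f/4)^l/|B|$ non-degenerate paths between some fixed pair $(b_0,b)$, each giving a non-degenerate solution of the $l$-term $S$-unit equation $\sum_i\gamma_i/(b-b_0)=1$. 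Theorem \ref{th:subspace} bounds this by $(8l)^{4l^2+lr+1}$, and taking logarithms gives roughly $l\log f\lesssim l^{11/2}r+\log|B|$. Choosing $l\approx(\log|B|/r)^{2/11}$ balances the two terms and yields $\log f\ll r^{2/11}(\log|B|)^{9/11}\ll(\log|B|)^{1-\eps}$ under the hypothesis $r\leq(\log|B|)^{1-6\eps}$. The crucial feature is that $(f/4)^l$ outpaces $(8l)^{4l^2+lr+1}$ only when $l$ is allowed to grow; at fixed $l$ the trade-off bottoms out at a polynomial bound.
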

\begin{proof}
For ease of notation, let $|B|=n$, and if necessary, augment $\Gamma$ by adjoining $-1$ to it. Consider the undirected graph $G$ on the vertex set $B$, whose edges are those $\{b_1,b_2\}$ satisfying $b_1-b_2\in \Gamma$. Let the number of edges be denoted by $nf(n)$, for some function $f(n)$. One can assume that $f(n)$ is increasing and larger than $(\log n)^r$, or else there is nothing to prove.

Let $d=f(n)/2$. We first prune $G$ by iteratively removing vertices with degree less than $d$, updating the degrees of the vertices (but not the threshold $d$) after each stage to reflect any removal. This process must terminate after at most $n$ steps as there are at most $n$ vertices that can be removed, and when it does terminate, we can have removed no more than $n\cdot f(n)/2$ edges. If necessary, we redefine $G$ to be the pruned graph, in which each vertex has degree at least $f(n)/2$

Fix a vertex $b_0$, and consider a non-degenerate path of length $l$ in $G$, starting from $b_0$. The path $b_0,b_1,\ldots,b_l$ corresponds to the telescopic sum
\[
(b_1-b_0)+(b_2-b_1)+\ldots+(b_l-b_{l-1}) = \gamma_1 + \gamma_2 +\ldots +\gamma_l,\]
and we call the path non-degenerate if no subsum of the right-hand side vanishes. Given a non-degenerate path of length $l$, one can append to it at least $f(n)/2 - (2^l-1)$ edges and get a non-degenerate path of length $l+1$. Indeed, there are only $2^l-1$ edges that could lead to a degeneracy. Hence, we find by way of induction that the number of non-degenerate paths of length $l$ is at least $f(n)^l/4^l,$ provided that $f(n)\geq 2^{l+2}$.

So, there are at least $f(n)^{l-1}/(4^{l-1} n)$ non-degenerate paths between $b_0$ and some other element $b\in B$, and so $f(n)^{l}/(4^{l} n)$ paths from $b_0$ to some $b_1\in B$, by appending an edge $b-b_1$ to said path.
On the other hand, Theorem \ref{th:subspace} provides the upper bound for this number of paths, once one chooses $a_i = 1/(b_1-b_0)$ for $1\leq i\leq l$. Taking logarithms and assuming that $r\geq 2$ and, say $l\geq 100$, simplifies the upper bound to
\[l\log f(n) \leq l^{\frac{11}2} r + \log n\]
Upon choosing $l \approx \left(\frac{\log n}{r}\right)^{\frac{2}{11}}$ to balance the terms of the right-hand side, we conclude
\[\log f(n) \ll {r}^{\frac{2}{11}} (\log n)^{\frac{9}{11}},\]
which completes the proof in view of the bound on $r$ assumed in the statement of the lemma.
\end{proof}
Observe that in Lemma \ref{l:subspace_app}, the condition $b_1-b_2\in \Gamma$ can be replaced by a coset membership $b_1-b_2\in u\cdot \Gamma$ by dilating $B$.

\section{The proof of Theorem \ref{MainThm}}

Let $A$ be a finite set of integers such that $\omega(a)\leq k$ for each $a\in A$. By passing to a subset and dilating by $-1$ if necessary, we may assume that $A\subset \NN$, at the cost of a constant factor. Set $K=|A\cdot A|/|A|$ and apply Theorem \ref{RegularFibredStructure} to obtain a set and a set $\tilde A\subseteq A$ of size \begin{equation}\label{eqn:ABound}
    |\tilde A|\geq \frac{|A|}{2^kk!(\log_2|A|)^k}\geq \frac{|A|}{(\log |A|)^{3k}},
\end{equation} by the hypothesized bounds on $k$, and having the structure
    \[\tilde A= \bigcup_{\vv\in V}\pp^\vv \cdot B_\vv=\bigcup_{b\in B}b\cdot \Gamma_b,\]
    where each $B_\vv$ is a finite set of integers prime to $p_1\cdots p_r$ such that their union $B=\bigcup_{\vv\in V}B_\vv$ satisfies \begin{equation}\label{eqn:BBound}\left|B\right|\leq \frac{4^{k+2}K|A|}{|\tilde A|},\end{equation}
    while each $\Gamma_b$ is a subset of $\Gamma=\ang{p_1,\ldots,p_r}$ with size satisfying $L\leq |\Gamma_b|\leq 2L$ for some appropriate $L$.

We now estimate the additive energy of $\tilde A$, and in doing so, we may assume that $\tilde A=-\tilde A$, at the cost of a constant. By Corollary \ref{EnergyDecomposition}, we have that 
\[E(\tilde A,\tilde A)\leq C^k\sum_{\vv_1,\vv_2\in V}E(\pp^{\vv_1} B_{\vv_1},\pp^{\vv_2}B_{\vv_2}).\]
The sum above counts solutions in $\tilde A$ to the equation
\[a_1-a_2=a_3-a_4,\]
where $a_1=b_1\pp^{\vv_1}$, $a_2=b_2\pp^{\vv_1}$ for some $b_1,b_2\in B_{\vv_1}$ and $a_3=\pp^{\vv_2}b_3$, $a_4=\pp^{\vv_2}b_4$ for some $b_3,b_4\in B_{\vv_2}$. In other words, we have reduced to the case were the exponents appearing on the left and right of the energy equation are both repeated. Let us now write $\gamma=\pp^{\vv_1}$ and $\gamma'=\pp^{\vv_2}$ so we are left counting solutions to \begin{equation}\label{eqn:reduced}
    b_1-b_2=\gamma^{-1}\gamma'(b_3-b_4),
\end{equation}
where now $b_1,\ldots,b_4\in B$, $\gamma\in \Gamma_{b_1}\cap \Gamma_{b_2}$ and $\gamma'\in \Gamma_{b_3}\cap \Gamma_{b_4}$. The only solutions where $b_3=b_4$ correspond to trivial solutions to the energy equation in $\tilde A$, of which there are at most $|\tilde A|^2$. For the remaining solutions, fix $b_3$, $b_4$ and $\gamma'$, and then observe that the number of solutions to (\ref{eqn:reduced}) is at most $O(|B|\exp((\log|B|)^{1-\eps}))$ by Lemma \ref{RNZ}. There are at most $|B|$ choices for $b_4$ and at most \[\sum_{b\in B}|\Gamma_b|=|\tilde A|\] choices for the pair $(b_3,\gamma')$. Putting all of this together, we see
\[E(\tilde A,\tilde A)\ll C^k(|\tilde A|^2+|\tilde A||B|^2\exp((\log|B|)^{1-\eps}))\ll C^k\lr{|\tilde A|^2+K^2|\tilde A|\exp(2(\log|A|)^{1-\eps})}\]
upon inserting the appropriate bounds for $|B|$ and $|\tilde A|$ coming from (\ref{eqn:BBound}) and (\ref{eqn:ABound}). If the quantity $C^k|\tilde A|^2$ dominates then we have proved more than enough. If not, then from the bound (\ref{eqn:ABound}), it would suffice to prove
\[K|A|+\frac{|A|^3}{K^2}\gg |A|^{\frac{5}{3}}\] which is now obvious since the left hand size is minimized when $K\approx |A|^{2/3}$.

\end{document}